\date{13 April 2010}
\theoremstyle{plain}
\newtheorem{theorem}{Theorem}[section]
\newtheorem{corollary}[theorem]{Corollary}
\newtheorem{lemma}[theorem]{Lemma}
\newtheorem{proposition}[theorem]{Proposition}
\theoremstyle{definition}
\newtheorem{definition}[theorem]{Definition}
\newtheorem{remark}[theorem]{Remark}
\newtheorem{question}[theorem]{Question}
\numberwithin{equation}{section}
\renewcommand{\a}{\alpha}
\renewcommand{\b}{\beta}
\newcommand{\bd}{\partial}
\newcommand{\cA}{\mathcal{A}}
\newcommand{\cC}{\mathcal{C}}
\newcommand{\cH}{\mathcal{H}}
\newcommand{\cK}{\mathcal{K}}
\newcommand{\cO}{\mathcal{O}}
\newcommand{\QQ}{\mathbb{Q}}
\newcommand{\RR}{\mathbb{R}}
\newcommand{\CC}{\mathbb{C}}
\newcommand{\HH}{\mathbb{H}}
\newcommand{\TT}{\mathbb{T}}
\newcommand{\ZZ}{\mathbb{Z}}
\newcommand{\inc}{\hookrightarrow}
\newcommand{\too}{\longrightarrow}
\newcommand{\x}{\times}
\newcommand{\ox}{\otimes}
\newcommand{\la}{\langle}
\newcommand{\ra}{\rangle}
\newcommand{\Tr}{\mathrm{Tr}}
\newcommand{\Sym}{\mathrm{Sym}}
\newcommand{\vol}{\mathrm{vol}}
\newcommand{\Map}{\mathrm{Map}}
 \DeclareMathOperator{\im}{im}
\DeclareMathOperator{\Hom}{Hom} 
\DeclareMathOperator{\End}{End} 
\DeclareMathOperator{\GL}{GL}
\DeclareMathOperator{\supp}{supp}
\title{Hodge theory for Riemannian solenoids}
\subjclass[2000]{Primary: Secondary: }
\keywords{Solenoids, harmonic forms, cohomology, Hodge theory.}
\author{Vicente Mu\~noz}
  \address{Facultad de Matem\'{a}ticas \\ Universidad Complutense
  de Madrid \\ Plaza Ciencias 3  \\ 28040 Madrid \\ Spain}
  \email{vicente.munoz@mat.ucm.es}
\author[R. P\'{e}rez Marco]{Ricardo P\'{e}rez Marco}
\address{CNRS, LAGA UMR 7539, Universit\'e Paris XIII \\
99, Avenue J.-B. Cl\'ement, 93430-Villetaneuse, France}
\email{ricardo@math.univ-paris13.fr}
\thanks{Partially supported through grant MEC
(Spain) MTM2007-63582}
\begin{document}
\maketitle

\begin{abstract}
  A measured solenoid is a compact laminated space endowed with a
  transversal measure.
  The De Rham $L^2$-cohomology
  of the solenoid is defined by using differential forms
  which are smooth in the leafwise directions and $L^2$
  in the transversal direction.
  We develop the theory of harmonic forms for Riemannian
  measured solenoids, and prove that this computes the De
  Rham $L^2$-cohomology of the solenoid. This implies in particular
  a Poincar\'{e} duality result.
\end{abstract}

{\hfill {\emph{Dedicated to the Memory of the 100th Anniversary of S.\ M.\ Ulam}}}

%%%%%%%%%%%%%%%%%%%%%%%%%%%%%%%%%%%%%%%%%%%%%%%%%%%%%%%%
\section{Introduction} \label{sec:introduction}
%%%%%%%%%%%%%%%%%%%%%%%%%%%%%%%%%%%%%%%%%%%%%%%%%%%%%%%%

In this article we continue the geometric study of solenoids initiated in \cite{MPM1}. Solenoids
are compact topological spaces which are locally modeled by the
product of a $k$-dimensional ball by some transversal space and admit a
transversal measure invariant by holonomy.

First we review how the De Rham cohomology theory, fundamental
classes, and singular cohomology do extend to solenoids and preserve most of the functorial
properties. The theory of bundles, connections and Chern classes
also goes through.

For a solenoid $S$ endowed with a transversal measure $\mu$, we
have a well defined De Rham $L^2$-cohomology theory, which is suited to implement
classical techniques from Harmonic and Functional Analysis. The De Rham $L^2$-cohomology is
defined by using forms which are smooth in the leaf-wise directions, and are $L^2$-integrable
with respect to $\mu$ in the transversal direction. We introduce the \emph{reduced} De Rham
$L^2$-cohomology $\bar{H}_{DR}^*(S_\mu)$ as the quotient the De Rham $L^2$-cohomology $H_{DR}^*(S_\mu)$
with the closure of $\{0\}$ (making it a Hausdorff topological space). See \cite{MoSch} for
these notions.

Most of the formal theory of pseudo-differential operators can be carried out for spaces of
sections of bundles which are $L^2$-transversally. In order to extend
classical Hodge theory as developed in \cite{Wells}, we define Sobolev spaces of sections
using the $L^2$ transversal structure. Sobolev regularity lemma
holds in this general setting but Rellich compact embedding does not in general. With these
tools at hand we can develop harmonic theory for solenoids.

We obtain a Hodge theorem giving an isomorphism of the reduced De Rham cohomology with the
space of harmonic forms. We prove the main results:

\begin{theorem}
Let $S_\mu$ be a compact oriented solenoid endowed with a transversal measure $\mu$.
Let $\cK^p(S_\mu)$ be the space of $p$-forms which are harmonic in the
leaf-wise directions and $L^2$-transversally (with respect to $\mu$). Then there is an
isomorphism:
  $$
  \bar{H}_{DR}^p(S_\mu) \cong \cK^p(S_\mu) \, .
  $$
\end{theorem}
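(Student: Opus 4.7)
The plan is to mimic the classical $L^2$ Hodge decomposition of Kodaira--de Rham, adapted to the solenoidal setting. Since Rellich compactness fails here, I cannot hope for $d\,\Omega^{p-1}(S_\mu)$ to have closed image in $L^2\Omega^p(S_\mu)$, and this is precisely why the statement is phrased in terms of the reduced (Hausdorffized) cohomology. Throughout I work in the Hilbert space $L^2\Omega^p(S_\mu)$ of $p$-forms equipped with the inner product $\langle\a,\b\rangle=\int_S \a\wedge *\b\,d\mu$, where $*$ is the leafwise Hodge star. The leafwise exterior derivative $d$ and its formal adjoint $\delta=\pm *d*$ satisfy $\langle d\a,\b\rangle=\langle\a,\delta\b\rangle$ on leafwise-smooth, $L^2$-transversal forms, by integration by parts along leaves together with the holonomy-invariance of $\mu$.

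From the Laplacian $\Delta=d\delta+\delta d$ one obtains the identity $\langle\Delta\omega,\omega\rangle=\|d\omega\|^2+\|\delta\omega\|^2$, so that $\cK^p(S_\mu)=\ker\Delta=\ker d\cap\ker\delta$. The three $L^2$-subspaces $\cK^p(S_\mu)$, $V^p:=\overline{d\,\Omega^{p-1}(S_\mu)}$ and $W^p:=\overline{\delta\,\Omega^{p+1}(S_\mu)}$ are pairwise orthogonal: a harmonic $\omega$ pairs with $d\a$ via $\delta\omega=0$ and with $\delta\b$ via $d\omega=0$, while $\langle d\a,\delta\b\rangle=\langle d^2\a,\b\rangle=0$.

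The key step is to establish the full decomposition $L^2\Omega^p(S_\mu)=\cK^p(S_\mu)\oplus V^p\oplus W^p$. Take $\omega\in(V^p\oplus W^p)^{\perp}$. Testing against smooth $d\a$ and $\delta\b$ yields $\delta\omega=0$ and $d\omega=0$ leafwise in the distributional sense, hence $\Delta\omega=0$ in the same sense. Now I invoke the Sobolev regularity lemma quoted in the introduction: $\Delta$ is elliptic in the leaf directions, so for $\mu$-almost every transversal point the distributional solution $\omega$ is smooth along the leaf; combined with the standing $L^2$-transversal hypothesis, this places $\omega$ in $\Omega^p(S_\mu)$ and therefore in $\cK^p(S_\mu)$. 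This leafwise elliptic regularity step, carried out in the presence of only an $L^2$ transversal structure, is the main obstacle: unlike the compact manifold case I cannot sharpen the decomposition to one with finite-dimensional harmonic piece, and only the closures $V^p$, $W^p$ (rather than $d\,\Omega^{p-1}$, $\delta\,\Omega^{p+1}$ themselves) enter the splitting.

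Finally, intersecting the decomposition with $\ker d$ kills $W^p$, because every $\a\in\ker d$ is $L^2$-orthogonal to $\delta\,\Omega^{p+1}(S_\mu)$; thus $\ker d=\cK^p(S_\mu)\oplus V^p$. Passing to the quotient by the closure of the exact forms gives
\begin{equation*}
\bar{H}_{DR}^p(S_\mu)=\ker d\,/\,\overline{d\,\Omega^{p-1}(S_\mu)}=\bigl(\cK^p(S_\mu)\oplus V^p\bigr)/V^p\;\cong\;\cK^p(S_\mu),
\end{equation*}
which is the desired Hodge isomorphism.
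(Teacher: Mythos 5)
Your architecture --- the Kodaira-type orthogonal decomposition $L^2\Omega^p=\cK^p\oplus \overline{\im d}\oplus\overline{\im \delta}$, identification of the orthogonal complement with the harmonic forms via elliptic regularity, and passage to the reduced quotient --- is exactly the route the paper takes (packaged there as the theorem on elliptic complexes, fed by Theorem \ref{thm:G} for the self-adjoint Laplacian). The genuine gap is in your regularity step. From $\omega\in(V^p\oplus W^p)^{\perp}$ you correctly get $\Delta\omega=0$ weakly, and leafwise elliptic regularity applied for $\mu$-a.e.\ transversal parameter $y$ does make $\omega(\cdot,y)$ smooth on almost every leaf. But membership in $\Omega^p_{L^2(\mu)}(S)$ requires that \emph{every leafwise derivative} of $\omega$ be square-integrable against $\mu_{K(U)}$, and ``smooth along a.e.\ leaf plus $\omega\in L^2$'' gives no control on $\int_{K(U)}\|\omega(\cdot,y)\|_{C^r}^2\,d\mu_{K(U)}$. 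What is needed is a quantitative interior estimate of the form $\|\omega(\cdot,y)\|_{W^{s+2,2}(D')}\le C\bigl(\|\Delta\omega(\cdot,y)\|_{W^{s,2}(D)}+\|\omega(\cdot,y)\|_{L^2(D)}\bigr)$ with $C$ uniform in $y$, which can then be squared and integrated over the transversal. That uniform estimate is precisely what the paper's pseudodifferential parametrix (Theorem \ref{thm:ll} and its corollary, giving boundedness of the parametrix on the spaces $W^{s,2}_\mu$) is constructed to deliver; without it your $\omega$ lands only in $L^2$, not in $\cK^p(S_\mu)\subset\Omega^p_{L^2(\mu)}(S)$.

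A second, smaller mismatch: you take $V^p$ to be the closure of $d\,\Omega^{p-1}_{L^2(\mu)}$ in the $L^2$ norm, while $\bar H^p_{DR}(S_\mu)$ quotients $\ker d\subset\Omega^p_{L^2(\mu)}(S)$ by the closure of $\im d$ in the topology of $\Omega^p_{L^2(\mu)}(S)$ (all the norms $W^{s,2}_\mu$ at once). Your identity ``$\ker d=\cK^p\oplus V^p$'' therefore computes the $L^2$-reduced cohomology; to obtain the statement as written you must also show that a leafwise-smooth closed form lying in the $L^2$-closure of $\im d$ lies in its closure in the finer topology. The paper handles this by establishing the splitting of Theorem \ref{thm:G} in every $W^{s,2}_\mu$ simultaneously, again via the parametrix, so the two closures agree on $\ker d$. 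Neither point changes the strategy, but both require the elliptic machinery your write-up tries to bypass.
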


\begin{corollary}
The $*$-Hodge operator gives an isomorphism (Poincar\'e duality) $*: \bar{H}_{DR}^p(S_\mu) \to
 \bar{H}_{DR}^{k-p}(S_\mu)$.
\end{corollary}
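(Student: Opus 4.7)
The plan is to deduce the Corollary from the Hodge-type theorem stated just above by transporting Poincar\'e duality from the space of harmonic forms back to reduced cohomology. Concretely, I would first invoke the isomorphism $\bar{H}_{DR}^p(S_\mu)\cong \cK^p(S_\mu)$ and then exhibit an isomorphism $*:\cK^p(S_\mu)\to \cK^{k-p}(S_\mu)$ induced by the leaf-wise Hodge star.

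First I would recall that an oriented solenoid with a leaf-wise Riemannian metric carries a Hodge star operator defined leaf-by-leaf: on each leaf, the fibres of the bundle $\Lambda^p T^*S$ and $\Lambda^{k-p}T^*S$ are identified by the classical pointwise operator $*$ associated to the Riemannian metric and the orientation. Since $*$ is an algebraic (zeroth order) operation in the leaf directions and does not mix with the transversal variable, it maps forms that are smooth along leaves and $L^2$-transversally to forms of the same regularity; in particular, it preserves the space of $p$-forms used to define $H_{DR}^*(S_\mu)$ and induces a continuous involutive operator (up to the usual sign $**=(-1)^{p(k-p)}$) on these spaces.

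Next I would verify that $*$ intertwines the Laplacian with itself, so it sends harmonic forms to harmonic forms. Since the codifferential is defined leaf-wise by the same formula as on a Riemannian manifold, namely $d^*=\pm *d*$, the identities $*\Delta=\Delta *$ and $\ker\Delta^p\xrightarrow{*}\ker\Delta^{k-p}$ hold pointwise on each leaf; the $L^2$-transversal part of the structure is preserved because $*$ acts only in the leaf directions. Hence $*$ restricts to a bounded linear map $\cK^p(S_\mu)\to \cK^{k-p}(S_\mu)$, and the relation $**=(-1)^{p(k-p)}\mathrm{Id}$ shows that it is an isomorphism with inverse $(-1)^{p(k-p)}*$.

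Finally I would combine these ingredients: composing the identifications from the theorem with the Hodge star produces the commutative diagram
\[
\begin{CD}
\bar{H}_{DR}^p(S_\mu) @>{\cong}>> \cK^p(S_\mu) @>{*}>> \cK^{k-p}(S_\mu) @<{\cong}<< \bar{H}_{DR}^{k-p}(S_\mu),
\end{CD}
\]
and the induced map on reduced De Rham cohomology is the desired Poincar\'e duality isomorphism. The main point requiring care, and the only non-formal step, is checking that the leaf-wise formulas $d^*=\pm *d*$ and $*\Delta=\Delta*$ remain valid in the solenoidal $L^2$-transversal framework; this should follow from the fact that these are purely algebraic-in-the-leaf identities and that the $L^2$ transversal structure is preserved by $*$, a point which was already built into the definition of $\cK^p(S_\mu)$ used in the theorem.
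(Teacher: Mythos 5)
Your proposal is correct and follows exactly the route the paper intends: the paper records that $*$ extends to $\Omega^p_{L^2_\mu}(S)$ because it is leaf-wise isometric, that $d^*=\pm *d*$, and then deduces the corollary from the Hodge isomorphism $\bar H^p_{DR}(S_\mu)\cong\cK^p(S_\mu)$ together with $*$ commuting with $\Delta$ and $**=(-1)^{p(k-p)}\mathrm{Id}$. No gaps; your emphasis on checking that the leaf-wise identities survive the $L^2$-transversal structure is precisely the one point the paper itself flags.
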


Contrary to the
classical situation for compact manifolds where Rellich theorem holds, here the spaces of harmonic
forms are not necessarily
finite dimensional. Actually, the situation for solenoids is more similar to that of
Hodge theory for $L^2$-forms for complete non-compact
manifolds (see e.g.\ \cite{complete}).
However, when the transversal measure $\mu$ is ergodic (see definition \ref{def:transversal-ergodicity}), we may expect finite-dimensionality (see question \ref{conj}).
We discuss at the end the simple example of Kronecker foliations on the torus.

%%%%%%%%%%%%%%%%%%%%%%%%%%%%%%%%%%%%%%%%%%%%%%%%%%%%%%%%
\section{Solenoids} \label{sec:solenoids}
%%%%%%%%%%%%%%%%%%%%%%%%%%%%%%%%%%%%%%%%%%%%%%%%%%%%%%%%

We review the basic notions on solenoids introduced in \cite{MPM1}.

\begin{definition}\label{def:k-solenoid} Let $k\geq 0$, $r\geq 1$, $s\geq 0$ with $r\geq s$. A
$k$-solenoid of class $C^{r,s}$ is a compact Hausdorff space endowed with an atlas
of flow-boxes $\cA=\{ (U_i,\varphi_i)\}$,
 $$
 \varphi_i:U_i\to D^k\x K(U_i)\, ,
 $$
where $D^k$ is the $k$-dimensional open ball, and $K(U_i)\subset \RR^l$ is an open set, the transversal
of the flow-box. The changes of charts $\varphi_{ij}=\varphi_i\circ
\varphi_j^{-1}$ are of the form $\varphi_{ij}(x,y)=(X(x,y), Y(y))$,
where $X(x,y)$ is of class $C^{r,s}$ and $Y(y)$ is of class $C^s$.
\end{definition}

We shall always use \emph{good} flow-boxes. By this, we mean a flow-box $U=D^k\x K(U)$ whose closure
is contained in another flow-box $V=D^k \x K(V)$. Therefore $\overline{K(U)} \subset K(V)$ is
compact.

Let $S$ be a $k$-solenoid, and $U\cong D^k \x K(U)$ be a flow-box for $S$. The sets
$L_y= D^k\x \{y\}$ are called the (local) leaves of the flow-box. A leaf $l\subset S$ of the
solenoid is a connected $k$-dimensional manifold whose intersection with any flow-box
is a collection of local leaves. The solenoid is oriented if the leaves are oriented
(in a transversally continuous way).

A transversal for $S$ is a subset $T$ which is a finite union of transversals of flow-boxes.
Given two local transversals $T_1$ and $T_2$ and
a path contained in a leaf from a point of $T_1$ to a point of $T_2$,
there is a well-defined germ of holonomy map at this point, $h$, from $T_1$ to $T_2$.

\begin{definition} \label{def:transversal-measure}
Let $S$ be a $k$-solenoid. A transversal measure $\mu=(\mu_T)$ for
$S$ associates to any local transversal $T$ a locally finite measure
$\mu_T$ supported on $T$, which are invariant by the holonomy,
i.e. if $h$ is a germ of holonomy map, then
$h_* \mu_{T_1}= \mu_{T_2}$.
\end{definition}

We denote by $S_\mu$ a $k$-solenoid $S$ endowed with a transversal
measure $\mu=(\mu_T)$. We refer to $S_\mu$ as a measured solenoid.
Observe that for any transversal measure $\mu=(\mu_T)$ the scalar
multiple $c\, \mu=(c \, \mu_T)$, where $c>0$, is also a transversal
measure. Notice that there is no natural scalar normalization of
transversal measures.

\begin{definition} \label{def:transversal-ergodicity}
A measured solenoid $S_\mu$ is \emph{ergodic} if for any transveral
$T$, and any subset $A\subset T$ invariant by the holonomy, either
$\mu_T(A)=0$ or $\mu_T(T-A)=0$.

A solenoid $S$ is
\emph{uniquely ergodic} if it has a unique (up to scalars)
transversal measure $\mu$ and its support is the whole of $S$.
\end{definition}

A Riemannian solenoid is a solenoid
endowed with a Riemannian metric in the tangent spaces of the leaves, and
with smoothness of class $C^{r,s}$. Note that a Riemannian metric defines
a volume form in each leaf. A {\it daval} measure $\nu$
on $S$ (\cite{MPM1}) is a finite Borel measure on
the solenoid which in any flow-box $U=D^k\times K(U)$, it decomposes as volume along leaves,
$\nu= \vol_{D^k}\times \mu_{K(U)}$. Such a
measure defines a tranversal measure and moreover there is a one-to-one correspondence
between daval measures and transversal measures. In particular, if a Riemannian solenoid
is uniquely ergodic, then there is a
\emph{unique} daval measure with total mass $1$.

\bigskip

Now let $M$ be a smooth manifold of dimension $n$ (and class $C^r$). An immersion of a $k$-solenoid
$S$ into $M$, with $k<n$, is a smooth map $f:S\to M$ (of class $C^{r,s}$) such that the differential
restricted to the tangent spaces of leaves has rank $k$ at every
point of $S$. If $M$ is a Riemannian manifold, this endows $S$ with the pull-back
Riemannian structure.

Denote by $\cC_k(M)$
the space of compactly supported currents of dimension $k$
on $M$. We have the following definition.

\begin{definition}\label{def:Ruelle-Sullivan}
Let $S_\mu$ be an oriented measured $k$-solenoid. An immersion
$f:S\to M$
defines a generalized Ruelle-Sullivan current $(S_\mu,f)\in \cC_k(M)$ as follows.
Let $\{U_i\}$ be a finite covering of $S$ by flow-boxes, and a partition of unity
$\{\rho_i\}$ associated to it. For $\omega\in \Omega^k(M)$, we define
 $$
 \la (S_\mu,f),\omega \ra=\sum_i \int_{K(U_i)} \left ( \int_{L_y}
 \rho_i f^* \omega \right ) \ d\mu_{K(U_i)} (y) \, ,
 $$
where $L_y$ denotes the horizontal disk of the flow-box.
\end{definition}

In \cite{MPM1} it is proved that $(S_\mu,f)$ is a closed current. Therefore, it defines
a real homology class
 \begin{equation}\label{eqn:RS}
 [S_\mu, f]\in H_k(M,\RR).
 \end{equation}
Ruelle and Sullivan defined
in \cite{RS} this notion for the restricted class of solenoids embedded in $M$.
In \cite{MPM3}, it is proved that if $a\in H_k(M,\RR)$ is any real homology class,
then there is an immersed oriented $k$-solenoid $f:S_\mu \to M$ such that $a=[S_\mu, f]$.
Moreover, $S$ can be chosen to be uniquely ergodic and with holonomy generated by a single
map. In \cite{MPM5}, we prove that the set of currents $(S_\mu,f)$ for immersed oriented
uniquely ergodic $k$-solenoids with $a=[S_\mu, f]$ is actually dense in the space of
closed currents $\a\in \cC_k(M)$ representing $a$.

%%%%%%%%%%%%%%%%%%%%%%%%%%%%%%%%%%%%%%%%%%%%%%%%%%%%%%%%%%%%%%%%%%%%%
\section{Cohomology of solenoids}\label{sec:cohomology}
%%%%%%%%%%%%%%%%%%%%%%%%%%%%%%%%%%%%%%%%%%%%%%%%%%%%%%%%%%%%%%%%%%%%%

In general, and for the remainder of the article, we shall consider solenoids
of class $C^{\infty,0}$.

\subsection{De Rham cohomology} \label{subsec:DrRham}

Let $S$ be a solenoid (here, we allow $S$ to be a \emph{non-compact} solenoid).
The space of $p$-forms $\Omega^p(S)$ consist of $p$-forms on leaves with function coefficients
that are smooth on leaves and partial derivatives of all orders continuous transversally.
Using the differential $d$ in the leaf-wise directions, we obtain the De Rham differential
complex $(\Omega^*(S),d)$. The De Rham cohomology groups
of the solenoid are defined as the quotients
  \begin{equation}\label{eqn:DR}
  H^p_{DR}(S):= \frac{\ker (d:\Omega^p(S)\to \Omega^{p+1}(S))}{\im
  (d: \Omega^{p-1}(S)\to \Omega^{p}(S))}\, .
  \end{equation}
We can also consider the spaces $\Omega^p_m(S)$ of differential forms with function coefficients
that are smooth on leaves and measurable transversally
(then the partial derivatives are automatically measurable transversally). Then define in the same way
the De Rham measurable cohomology groups $H^p_{DRm}(S)$ using the complex $(\Omega^*_m(S),d)$.
Note the natural map $H^p_{DR}(S) \to H^p_{DRm}(S)$.

\begin{proposition}
Let $\underline{\RR}_c$ and $\underline{\RR}_m$ be respectively the sheaf of functions which are locally
constant on leaves and transversally continuous, resp. measurable.
Then we have isomorphisms
 $$
  H_{DR}^p(S) \cong H^p(S,\underline{\RR}_c)\, ,
 $$
and
 $$
  H_{DRm}^p(S) \cong H^p(S,\underline{\RR}_m)\, .
 $$
\end{proposition}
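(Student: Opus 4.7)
The plan is to mimic the classical sheaf-theoretic proof of the De Rham theorem, replacing smooth functions on a manifold with forms that are leafwise smooth and transversally continuous (resp. measurable). I will describe the argument for $\underline{\RR}_c$; the case of $\underline{\RR}_m$ is identical mutatis mutandis.

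First, I would introduce the sheaf $\underline{\Omega}^p$ on $S$ whose sections on an open set $U$ are the forms in $\Omega^p(U)$, i.e. leafwise smooth $p$-forms with all partial derivatives transversally continuous. Clearly $\underline{\Omega}^0 \supset \underline{\RR}_c$, and the leafwise exterior derivative yields a complex
\begin{equation*}
0 \too \underline{\RR}_c \too \underline{\Omega}^0 \stackrel{d}{\too} \underline{\Omega}^1 \stackrel{d}{\too} \underline{\Omega}^2 \stackrel{d}{\too} \cdots
\end{equation*}
of sheaves on $S$. The proof then reduces to two claims: (i) this complex is a resolution of $\underline{\RR}_c$, and (ii) each $\underline{\Omega}^p$ is acyclic for the global sections functor. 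Granting these, a standard diagram chase through the double complex of \v{C}ech cochains with values in $\underline{\Omega}^\bullet$ identifies $H^p(S,\underline{\RR}_c)$ with the cohomology of $\Gamma(S,\underline{\Omega}^\bullet)$, which is exactly $H^p_{DR}(S)$ by \eqref{eqn:DR}.

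For (ii), I would show that $\underline{\Omega}^p$ is a fine sheaf, which is enough since $S$ is paracompact. Given a locally finite open cover of $S$ by flow-boxes, one builds a subordinate partition of unity by leafwise smooth, transversally continuous functions: in a flow-box $D^k\x K(U)$ one uses bump functions in the $D^k$ direction times transversally continuous cut-offs in $K(U)$, and a standard patching yields a $C^{\infty,0}$ partition of unity on $S$. Multiplication by such functions gives a partition of the identity endomorphism of $\underline{\Omega}^p$, so it is fine and hence acyclic.

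The main obstacle is (i), the local exactness (a leafwise Poincaré lemma that respects transversal regularity). On a flow-box $U=D^k\x K(U)$, any $\omega\in \Omega^p(U)$ decomposes as a family of smooth $p$-forms on the ball $D^k$ parametrised continuously (resp. measurably) by $y\in K(U)$. I would apply the classical cone homotopy operator $K$ on $D^k$ fibrewise, producing $\eta(y) = K(\omega(\cdot,y))$ with $\omega = d\eta + K(d\omega)$ on each leaf. The point to check is that $(x,y)\mapsto \eta(x,y)$ remains transversally continuous (resp. measurable) together with all leafwise derivatives; this follows because $K$ is given by an integral over the segment from the basepoint to $x$ against $\omega$ and its leafwise derivatives, and continuity (resp. measurability) passes under integrals by dominated convergence applied on the compact ball $D^k$. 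When $d\omega=0$ this yields $\omega = d\eta$ locally, and the kernel of $d:\underline{\Omega}^0\to\underline{\Omega}^1$ consists of functions that are locally constant along leaves and transversally continuous, i.e.\ $\underline{\RR}_c$. This establishes exactness of the complex and completes the proof. The measurable case is obtained by repeating the identical argument using measurable parameter dependence in the homotopy operator, yielding the parallel isomorphism $H^p_{DRm}(S)\cong H^p(S,\underline{\RR}_m)$.
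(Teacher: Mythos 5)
Your proposal is correct and follows essentially the same route as the paper: a resolution of $\underline{\RR}_c$ (resp. $\underline{\RR}_m$) by the sheaves of leafwise forms, acyclicity of those sheaves via $C^{\infty,0}$ partitions of unity, and a fibrewise Poincar\'e lemma whose homotopy operator is checked to preserve transversal continuity (resp. measurability). You simply spell out the acyclicity and parameter-dependence steps that the paper leaves implicit; no gap.
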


\begin{proof}
The proofs are similar. We prove the first isomorphism. It follows from the existence of the sheaf resolution
 $$
 \underline{\RR}_c \to \Omega^0 \stackrel{d}{\to} \Omega^1 \stackrel{d}{\to} \ldots
 $$
The exactness of this complex of sheaves is a Poincar\'{e} lemma: for a small open set $U=D^k\times K(U)$,
the complex $0\to \underline{\RR}_c(U) \to \Omega^0(U) \to \Omega^1(U) {\to} \ldots$
is exact. If $d_x \alpha(x,y)=0$ then $\alpha(x,y)=d_x\beta(x,y)$, for a collection of
forms $\beta(x,y)$, $y\in K(U)$. We can choose $\beta(x,y)$ to depend continuously (or measurably for the
proof of the second isomorphism) on
$y$, as can be seen by the usual construction. Note that clearly if $f(x,y)$ is a function
with $df=0$ then $f(x,y)$ is locally constant on $x$.
\end{proof}

\begin{remark}
The spaces $\Omega^p(S)$ are topological vector spaces. Therefore the De Rham cohomology
(\ref{eqn:DR}) inherits a natural topology. In general, these spaces are infinite dimensional (even for
compact solenoids). In some references, it is customary to take the closure of the spaces
$\im d$ in definition (\ref{eqn:DR}), obtaining the \emph{reduced De Rham cohomology groups}
 $$
 \bar H^p_{DR}(S)=\frac{\ker d|_{\Omega^p}}{\, \overline{ \im d|_{\Omega^{p1}}}\, }\, .
  $$
This is equivalent to quotienting $H^p_{DR}(S)$ by $\overline{\{0\}}$, obtaining thus
Hausdorff vector spaces. %We shall not make use of this alternative definition.
\end{remark}

We shall list some basic properties of the De Rham cohomology:
\begin{enumerate}
 \item Functoriality. Let $S_1$, $S_2$ be two solenoids. A smooth map
 $f:S_1\to S_2$ is a map sending leaves to leaves and transversally continuous.
 $f$ defines a map on De Rham cohomologies, $f^*: H_{DR}^p(S_2) \to H_{DR}^p(S_1)$,
 by $f^*[\omega]=[f^*\omega]$. This applies in particular to an immersion of a solenoid
 into a smooth manifold $f:S\to M$, or to the inclusion of a leaf $i:l\to S$.

 \item Mayer-Vietoris sequence. Let $U,V$ be two open subsets of a
solenoid $S$. There is a short exact
sequence of complexes:
$\Omega^\bullet (U\cup V) \to\Omega^\bullet (U)\oplus \Omega^\bullet (V) \to
 \Omega^\bullet (U\cap V)$. The only non-trivial point is the surjectivity of
 the last map, but if follows from the existence of a partition of unity
$\{\rho_U,\rho_V\}$ subordinated to $\{U,V\}$: any
$\omega\in \Omega^\bullet (U\cap V)$
is the image of $(\rho_V \omega, -\rho_U \omega)$. Taking the associated long
exact sequence, we get the
Mayer-Vietoris exact sequence
 $$
 \ldots \to H^p_{DR}(U\cup V)\to H^p_{DR}(U)\oplus H^p_{DR}(V) \to H^p_{DR}(U\cap V) \to
  H^{p+1}_{DR}(U\cup V)\to \ldots
 $$

 \item Homotopy. A homotopy between two maps
 $f_0,f_1:S_1\to S_2$ is a map $F: S_1\times [0,1] \to S_2$ (where
 $S_1\times [0,1]$ is given the solenoid structure with leaves $l\times [0,1]$,
 for $l\subset S_1$ a leaf of $S_1$) such that $F(x,0)=f_0(x)$ and $F(x,1)=f_1(x)$.
 We say that the maps $f_0,f_1$ are homotopic, written $f_0\sim f_1$.
 In this case $f_0^* =f_1^*:H_{DR}^p(S_2)\to H_{DR}^p(S_1)$.

 We prove this as follows: factor $f_0=F\circ i_0$, and $f_1=F\circ i_1$, where $i_t:S_1
 \to S_1\times [0,1]$ is given as $i_t(x)=(x,t)$. Then
 $f_0^*=f_1^*$ follows from $i_0^*=i_1^*$. Let us check this. Consider
 $p:S_1\times [0,1]\to S_1$. Then $p\circ i_t=Id$. Let us see that
 $i_t^*: H^p_{DR}(S\x I) \to H^p_{DR}(S)$ is an isomorphism inverse to $p^*$.
 It is enough to see that $h^*=(i_t\circ p)^*: H^p_{DR}(S\x I)\to H^p_{DR}(S\x I)$
 is the identity, $h(x,s)=(x,t)$, $t\in I$ fixed.
 For a closed $p$-form $\omega=\omega_1(x,s)+
 \omega_2(x,s)\wedge ds$, note that
 $0=d\omega=d_x\omega_1 + (-1)^p\frac{\bd \omega_1}{\bd s} \wedge ds +
 d_x\omega_2 \wedge ds=0$ implies that $d_x\omega_1=0$ and
 $\frac{\bd\omega_1}{\bd s} = (-1)^{p+1} d_x\omega_2$.
 Now $p^*i^*_t\omega= \omega_1(x,t)$. Hence
 $$
 \begin{aligned}
  (Id-p^*i_t^*) \omega &=  \omega_1(x,s)- \omega_1(x,t) +
  \omega_2(x,s)\wedge ds \\ &= \int_t^s \frac{\bd\omega_1}{\bd s}(x,u) du +\omega_2(x,s)\wedge ds = \\
  &=  (-1)^{p+1} \left(\int_t^s d_x \omega_2(x,u) du + (-1)^{p-1} \omega_2(x,s)\wedge ds\right) \\
   &= (-1)^{p+1} d \left(\int_t^s \omega_2(x,u)du \right),
  \end{aligned}
   $$
  as required.

\item We say that two solenoids $S_1$, $S_2$ are of the same homotopy
type if there are maps $f:S_1\to S_2$, $g:S_2\to S_1$, such that
$f\circ g\sim Id_{S_2}$, $g\circ f\sim Id_{S_1}$. Then the De Rham cohomology
groups of $S_1$ and $S_2$ are
isomorphic.

\end{enumerate}

\subsection{Fundamental classes}
Let $S$ be an \emph{oriented} compact $k$-solenoid.
The De Rham cohomology groups do not depend on any measure of $S$.
If $\mu=(\mu_T)$ is a transversal measure, then the integral
$\int_{S_\mu}$ descends to cohomology giving a map \cite{MPM1}
  \begin{equation}\label{eqn:int}
  \int_{S_\mu} : H^k_{DR}(S) \to \RR\ .
  \end{equation}

We define the solenoidal homology as
 $$
 H_p(S,\underline\RR_c):=H^p(S,\underline{\RR}_c)^*=H^p_{DR}(S)^*.
 $$
Then the map (\ref{eqn:int}) defines a homology class $[S_\mu] \in
H^k(S,\underline\RR_c)^*= H_k(S,\underline\RR_c)$. We shall call this
element the \emph{fundamental class} of $S_\mu$.

Any map $f:S_1\to S_2$ defines a map $f^*:H^p_{DR}(S_2)\to
H^p_{DR}(S_1)$ and hence, by dualizing, a map $f_*:H_p(S_1,\underline{\RR}_c) \to
H_p(S_2,\underline{\RR}_c)$. Applying this to an immersion $f:S_\mu\to M$ of an oriented,
measured, compact solenoid into a smooth manifold, then we have the equality
  $$
  f_*[S_\mu]= [S_\mu,f]\ ,
  $$
with the generalized Ruelle-Sullivan class defined in (\ref{eqn:RS}).

Note that if $S$ has a dense leaf (in particular when $S$ it is minimal, i.e. all leaves are dense), then
$H_0(S,\underline{\RR}_c)=\RR$. On the other hand, the dimension of the top degree homology counts
the number of mutually singular tranverse measures on $S$.

\begin{theorem} \label{thm:top}
 Let $S$ be a compact, oriented $k$-solenoid. Then $H_k(S,\underline\RR_c)$ is
 isomorphic to the real vector space generated by all transversal measures.
\end{theorem}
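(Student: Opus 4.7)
The plan is to construct mutually inverse linear maps between the real vector space $\cM(S)$ of signed transversal measures and the dual $H_k(S,\underline{\RR}_c)=H^k_{DR}(S)^*$. The forward map sends a transversal measure $\mu$ to the integration functional $\int_{S_\mu}$ of (\ref{eqn:int}); this is well defined on cohomology by the results of \cite{MPM1} and extends linearly to signed combinations of positive transversal measures.

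For the inverse, I would start with $L\in H^k_{DR}(S)^*$, lifted to $L\colon\Omega^k(S)\to\RR$ vanishing on $d\Omega^{k-1}(S)$. Fix a good flow-box $U=D^k\times K(U)$ and a compactly supported smooth bump $\rho$ on $D^k$ with $\int_{D^k}\rho\,dx_1\wedge\cdots\wedge dx_k=1$. For a compactly supported continuous $\phi$ on $K(U)$, set $\omega_\phi:=\rho(x)\phi(y)\,dx_1\wedge\cdots\wedge dx_k$, extend by zero to $S$, and define a candidate local transversal measure by $\mu_{K(U)}(\phi):=L(\omega_\phi)$. Three verifications are needed. Independence of $\rho$: if $\rho_1,\rho_2$ both integrate to $1$ then $(\rho_1-\rho_2)\,dx_1\wedge\cdots\wedge dx_k$ is compactly supported in $D^k$ with zero total integral, hence equals $d_x\eta$ for a compactly supported $(k-1)$-form $\eta$, so $\phi(y)\eta$ extends globally and shows $L(\omega_{\phi,\rho_1})=L(\omega_{\phi,\rho_2})$. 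Holonomy invariance: a holonomy $h\colon T_1\to T_2$ is covered by a chain of flow-boxes, and a bump over $T_1$ can be slid leaf-wise to a bump over $T_2$; the difference is leaf-wise closed with vanishing leaf-wise integral, hence exact by the parametric Poincar\'e lemma already invoked in the sheaf-resolution proposition. Consistency on overlaps of flow-boxes is handled in the same spirit.

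The crux is to show the two maps are mutually inverse, which reduces to $L(\omega)=\int_{S_\mu}\omega$ for every $\omega\in\Omega^k(S)$. Using a finite cover of $S$ by good flow-boxes and a subordinate partition of unity, I would reduce to $\omega$ supported in a single $U=D^k\times K(U)$. Writing $\omega=f(x,y)\,dx_1\wedge\cdots\wedge dx_k$ and setting $\phi(y):=\int_{D^k}f(\cdot,y)\,dx$, one observes that on each slice $y$ the leaf-wise form $(f(x,y)-\rho(x)\phi(y))\,dx_1\wedge\cdots\wedge dx_k$ is compactly supported in $D^k$ with vanishing total integral; a fibred Poincar\'e lemma then produces a $(k-1)$-form $\beta(x,y)$ with $d_x\beta=\omega-\omega_\phi$, smooth in $x$, continuous in $y$, and compactly supported in $x$. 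Applying $L$ gives $L(\omega)=L(\omega_\phi)$, and unravelling the definition with Fubini yields $L(\omega_\phi)=\int_{K(U)}\phi\,d\mu_{K(U)}=\int_{S_\mu}\omega$. The opposite composition is immediate from the definition of $\mu_{K(U)}(\phi)$.

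The principal obstacle I anticipate is the fibred Poincar\'e lemma used in the last step: the primitive $\beta(x,y)$ must be smooth in $x$, continuous (not merely measurable) in $y$ transversally, and compactly supported in $D^k$ so that extension by zero lands in $\Omega^{k-1}(S)$. Continuity in $y$ follows from an explicit integration-kernel construction parallel to the one cited in the preceding sheaf-resolution proposition, and the compact $x$-support is guaranteed exactly by the vanishing leaf-wise integral of $\omega-\omega_\phi$ --- the parametric avatar of the isomorphism $H^k_c(D^k)\cong\RR$.
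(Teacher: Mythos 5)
Your overall strategy coincides with the paper's: the forward map is integration against the measure, and the inverse is built flow-box by flow-box by testing $L$ on forms of the type $\rho(x)\phi(y)\,dx_1\wedge\cdots\wedge dx_k$. There is, however, one genuine gap, and it occurs at the very point where the measure is supposed to appear. Setting $\mu_{K(U)}(\phi):=L(\omega_\phi)$ only defines a linear functional on compactly supported continuous functions on $K(U)$; to assert that this functional \emph{is} a signed measure of finite total mass --- which is what a transversal measure must assign to each transversal --- you must show that $\phi\mapsto L(\omega_\phi)$ is continuous for the $C^0$-topology and then invoke the Riesz representation theorem. This in turn uses that $L$ is a \emph{continuous} functional on $\Omega^k(S)$: the dual in $H_k(S,\underline{\RR}_c)=H^k_{DR}(S)^*$ has to be understood as the topological dual (the paper's remark that the $\Omega^p(S)$ are topological vector spaces is what makes this meaningful). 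The paper makes exactly this step explicit: if $\phi_n\to\phi$ in $C^0$ then $\tilde\phi_n\,\omega\to\tilde\phi\,\omega$ in $\Omega^k(S)$, hence $C(\tilde\phi_n\,\omega)\to C(\tilde\phi\,\omega)$, and Riesz then produces $\mu_{K(U)}$ with finite total mass. Your list of ``three verifications'' (independence of $\rho$, holonomy invariance, consistency on overlaps) omits this fourth and most essential one; without it the inverse map does not land in the space of measures.

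The remainder of your argument is sound and in one respect more complete than the paper's. The verification that the two maps are mutually inverse --- reduction to a single flow-box by a partition of unity, followed by the fibred Poincar\'e lemma producing $\beta(x,y)$ with $d_x\beta=\omega-\omega_\phi$, compactly supported in $x$ and continuous in $y$ --- is left implicit in the paper, which only checks that the constructed $\mu_{K(U)}$ is independent of $\omega$ and of coordinates and is holonomy-invariant. Your concern about the regularity and support of the primitive $\beta$ is legitimate and is resolved exactly as you indicate, by the explicit homotopy-operator construction already used in the parametric Poincar\'e lemma of the sheaf-resolution proposition, together with the vanishing of the leaf-wise integral of $\omega-\omega_\phi$ (the relative version of $H^k_c(D^k)\cong\RR$). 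Once the Riesz step is inserted, your proof is complete and follows the same route as the paper's.
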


\begin{proof}
  There is a well-defined linear map
   $$
   \Psi:\mu \mapsto [S_\mu] \in H_k(S,\underline{\RR}_c)
   $$
  which sends each signed transversal measure (with finite total mass in each transversal)
  to the associated fundamental class: Decompose $\mu=\mu_+-\mu_-$ and integrate with respect to each measure.
  We prove first that $\Psi$ is a bijection.

  If $\Psi(\mu)=0$ then
   $$
   \int_{S_\mu} \omega =0
   $$
  for all $k$-forms $\omega\in\Omega^k(S)$. Let $U=D^k \times K(U)$
  be a flow-box and let $\omega$ be a compactly supported
  $k$-form with integral $1$ on $D^k$. For any function $\phi$ with
  $\supp \phi \subset K(U)$, we have
   $$
   0=\int_{S_\mu} \phi\, \omega=\int_{K(U)} \phi \left(\int_{L_y}
   \omega\right) d\mu_{K(U)}(y) =\int_{K(U)} \phi \, d\mu_{K(U)}(y)\, .
   $$
   Therefore $\mu_{K(U)}=0$. So $\mu=0$.

  We prove that $\Psi$ is onto. Let $C\in H_k(S,\underline{\RR}_c)$ be given. By definition
  $C:H^k_{DR}(S)\to \RR$, so for any $k$-form $\omega$, we have
  $C(\omega)\in\RR$ and $C(d\eta)=0$, for $\eta\in
  \Omega^{k-1}(S)$. %, and by continuity $|C(\omega )|\leq C_0. ||\omega ||_{C^0}$.
  Let $U=D^k \times K(U)$ be a flow-box, and fix a compactly supported
  $k$-form $\omega$ with integral $1$ on $D^k$. Let
  $\phi$ be a continuous function with  $\supp \phi \subset K(U)$
  and let $\tilde \phi$  be the corresponding function
  on the flow box $U=D^k\times K(U)$ constant on leaves. Now the map
  $$
  \phi \mapsto C(\tilde \phi\,  \omega )\ ,
   $$
  is a continuous linear functional: if $\phi_n\to \phi$ in $C^0$ then  $\tilde \phi_n\, \omega\to
  \tilde \phi\, \omega$ in $\Omega^k(S)$, so $C(\tilde \phi_n\, \omega) \to
  C(\tilde \phi\, \omega)$. Therefore by Riesz representation theorem it is represented by
  a signed measure $\mu_{K(U)}$ with finite total mass,
  $$
  C(\tilde \phi \, \omega)= \int_{K(U)} \phi \, d\mu_{K(U)} \ .
  $$
  The measure $\mu_{K(U)}$ does not depend on the choice of $\omega$.
  Taking another $\omega'$ we
  have $\omega'-\omega=d\eta$, with $\eta$ compactly supported in
  $D^k$. Hence $\tilde \phi\,\omega'-\tilde \phi\,\omega = d(\tilde \phi\,\eta)$ and
  $C(\tilde \phi\,\omega')=C(\tilde \phi\,\omega)$.

  Also the constructed measure does not depend on the coordinates of the flow-box.
   Taking a change of chart $\Phi: D^k\times K(U)\to
  D^k\times K(U)$, we have that $C(\Phi^*(\tilde \phi\,\omega))=C(\tilde \phi\,\omega)$
  since $\Phi^*(\tilde \phi\,\omega)- \tilde \phi\,\omega =d\eta$ (both are
  compactly supported forms with leaf-wise integral $1$).
  Finally, the $(\mu_T)$ are invariant by the holonomy. We only need to check the
  invariance by local holonomy, and this follows from
  the previous remark.
\end{proof}

\begin{remark}
There is no Poincar\'e duality for $H_{DR}^*(S)$ in general.
Moreover these spaces may be
infinite dimensional (even for uniquely ergodic solenoids):
if $S$ is an $n$-torus foliated by irrational lines, then
$H_{DR}^1(S)$ can be infinite-dimensional as the example 
in section \ref{sec:example} shows.
\end{remark}

\subsection{Singular cohomology}

We consider the space $\Map(I^n,S)$ of continuous maps $T:I^n \to S$ mapping
into a leaf, and endow it with the uniform convergence topology.
The degenerate maps (see \cite{Ma}) form a closed subspace, therefore the quotient,
$\Map'(I^n,S)$, has a natural quotient topology. The space of
singular chains $C_n(S)$ is the free abelian group generated by
$\Map'(I^n,S)$. There is a natural boundary map $\bd: C_n(S)\to C_{n-1}(S)$.

Let $G$ be any topological abelian group.
Define the cochains $C^n(S,G)=\Hom_{cont}(C_n(S),G)$ as the
continous homomorphisms. That is, $\varphi:C_n(S)\to G$ such that
if $T_k:I^n\to S$ are maps which converge to $T_o:I^n\to S$
in the uniform topology, then $\varphi(T_k)\to \varphi(T_o)$.
Define the differential $\delta:C^n(S)\to C^{n+1}(S)$ by
$\delta \varphi(T)= \varphi(\bd T)$. The solenoid
singular cohomology of $S$ with coefficients in $G$ is defined as:
  $$
  H^n(S,G):=\frac{\ker(\delta: {C}^n(S,G)\to {C}^{n+1}(S,G))}{\im
  (\delta: {C}^{n-1}(S,G)\to {C}^{n}(S,G))}\, .
  $$

We have some basic properties:
\begin{enumerate}
 \item Functoriality. Let $f:S_1\to S_2$ be a solenoid map. Then
 there is a map $f_*:C_n(S_1)\to C_n(S_2)$, $f_*(T)=f\circ T$, and
 a map $f^*:C^n(S_2,G)\to C^n(S_1,G)$, $f^*(\varphi)=\varphi\circ f$.
 Clearly $f^* \delta=\delta f^*$, so the map descends to cohomology:
 $f^*:H^n(S_2,G)\to H^n(S_1,G)$.
 \item Homotopy. Suppose that $f,g:S_1\to S_2$ are two homotopic
solenoid maps. The usual construction yields a chain homotopy $H$ between
$f^*$ and $g^*$ (one only have to check that this map sends continuous
cochains into continuous cochains). Therefore $f^*=g^*
:H^n(S_2,G)\to H^n(S_1,G)$.
 \item If $S_1$, $S_2$ are of the same homotopy type, then $H^n(S_1,G)\cong H^n(S_2,G)$.
 \item If $U=D^k\times K(U)$ is a flow-box, then $U$ is of the same homotopy type than
 $\{*\}\x K(U)$. Therefore $H^n(U)=0$ for $n>0$, and $H^0(U)=\Map_{cont}(K(U),G)$. In particular,
this implies that
 $$
 \underline{\RR}_c \to C^0(-,\RR) \stackrel{\delta}{\to} C^1(-,\RR) \stackrel{\delta}{\to} \ldots
 $$
 is a resolution. Therefore there is an isomorphism $H^n(S,\RR)\cong H^n(S,\underline{\RR}_c)$.
\item Mayer-Vietoris. For two open sets $U,V$ with $S=U\cup V$,
define $C_n(S;U,V)$ as the subcomplex generated
by those singular chains completely contained in either $U$ or $V$. Define accordingly $C^n(S;U,V)$.
It is not difficult to see that the restriction $C^n(S,G) \to C^n(S,G;U,V)$ is chain homotopy
equivalence (by a process of subdivision of simplices, as in \cite{Ma}). Therefore the exact
sequence $0\to C^n(S,G;U,V) \to C^n(U,G)\oplus C^n(V,G)\to C^n(U\cap V, G)\to 0$ gives rise to a
long exact sequence:
 $$
 \ldots \to H^p(U\cup V,G)\to H^p(U,G)\oplus H^p(V,G) \to H^p(U\cap V,G) \to
  H^{p+1}(U\cup V,G)\to \ldots
 $$
\end{enumerate}

%%%%%%%%%%%%%%%%%%%%%%%%%%%%%%%%%%%%%%%%%%%%%%%%%%%%%%%%%%%%%%%%
\section{De Rham $L^2$-cohomology} \label{sec:L2-cohomology}
%%%%%%%%%%%%%%%%%%%%%%%%%%%%%%%%%%%%%%%%%%%%%%%%%%%%%%%%%%%%%%%%

Now consider a $k$-solenoid $S$ with a transversal measure $\mu$.
There is a notion of cohomology which takes into account the transversal measure
structure. For this, we work with forms which are $L^2$-transversal
relative to $\mu$.

\begin{definition}
A function $f$ is \emph{$L^2(\mu)$-transversally smooth} if in any (good) flow-box
$U=D^k\x K(U)$ all partial derivatives on the first variable exist and
are in $L^2(\mu_{K(U)})$, i.e. if we write $f$ as $f(x,y)$ then
for all $r\geq 0$,
 $$
 \int_{K(U)}  ||f( \cdot ,y)||_{C^r}^2 \,  d\mu_{K(U)}(y) <\infty \ .
 $$
\end{definition}

We consider the space of forms
 $$
 \Omega_{L^2(\mu)}^p(S)
 $$
which are $L^2(\mu)$-transversally smooth, i.e. locally these are forms
$\alpha= \sum f_I(x,y) dx_I$, where $f_I$ are $L^2(\mu)$-transversally smooth functions.
There is a well-defined differential along leaves $d:\Omega^p_{L^2(\mu)}(S) \to \Omega^{p+1}_{L^2(\mu)}(S)$
which defines the complex $(\Omega^*_{L^2(\mu)}(S), d)$. We define the
\emph{De Rham $L^2$-cohomology} vector space as the quotients
  \begin{equation}\label{eqn:DRL2}
  H^p_{DR}(S_\mu):= \frac{\ker (d:\Omega^p_{L^2(\mu)}(S)\to \Omega^{p+1}_{L^2(\mu)}(S))}{\im
  (d: \Omega^{p-1}_{L^2(\mu)}(S)\to \Omega^{p}_{L^2(\mu)}(S))}\, .
  \end{equation}
We also introduce the reduced De Rham $L^2$-cohomology:
  \begin{equation}\label{eqn:DRL2}
  \bar H^p_{DR}(S_\mu):= \frac{\ker d}{\, \overline{\im d}\, } \, .
  \end{equation}

Note that there are natural maps
 $$
 H^p_{DR}(S) \to H^p_{DR}(S_\mu) \to H^p_{DRm}(S) \, ,
 $$
since $C^{\infty,0}$-functions are $L^2(\mu )$-transversally smooth.
The integration map $\int_{S_\mu}$ is well-defined for forms in
$\Omega^k_{L^2(\mu)}$, since a $L^2(\mu)$-transversally smooth
$k$-form is automatically $L^1(\mu)$-transversal (all measures
are finite measures on compact transversals).
So we have $\int_{S_\mu}: H^k_{DR}(S_\mu)\to \RR$.

Let $\underline\RR_\mu$ be the sheaf of measurable functions which are locally
constant on leaves and $L^2(\mu)$-transversally.
A standard Poincar\'e lemma shows that there is a resolution of sheaves
 $$
 \underline\RR_\mu \to \Omega^0_{L^2(\mu)} \to \Omega^1_{L^2(\mu)} \to \ldots \to \Omega^k_{L^2(\mu)} \, .
 $$
So we get a natural isomorphism
 $$
 H^p_{DR}(S_\mu) \cong H^p(S,\underline\RR_\mu) \, .
  $$

\begin{definition} The ergodic dimension of $\mu$ is  $1\leq d(\mu )\leq +\infty $ defined to be
the maximal number of mutually singular non-zero transversal measures $\mu_i$ such that
$$
\mu \geq \sum_{i=1}^d c_i \mu_i \ ,
$$
with $c_i>0$ in any transversal.
\end{definition}

By classical ergodic theory, if the ergodic dimension $d(\mu )$ is finite then $\mu$
is a linear combination of exactly $d(\mu )$ ergodic
transversal measures.

\begin{lemma}
The ergodic dimension of $\mu$ is equal to the dimension of $ H^0_{DR}(S_\mu)$,
  $$
  d(\mu )=\dim_\RR H^0_{DR}(S_\mu) \ .
  $$
In particular $\mu$ is ergodic if and only if $H^0_{DR}(S_\mu)\cong \RR$.
\end{lemma}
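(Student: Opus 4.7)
The plan is to identify $H^0_{DR}(S_\mu)$ with the space of holonomy-invariant $L^2$ functions on a transversal, and then use the ergodic decomposition to relate this to $d(\mu)$. Since $\Omega^{-1}_{L^2(\mu)}=0$, we have $H^0_{DR}(S_\mu)=\ker(d\colon \Omega^0_{L^2(\mu)}(S)\to \Omega^1_{L^2(\mu)}(S))$. A class is represented by a function $f$ which in every good flow-box $U=D^k\x K(U)$ is smooth in $x$, lies in $L^2(\mu_{K(U)})$ in $y$, and satisfies $d_xf=0$. Hence $f(x,y)=g_U(y)$ for some $g_U\in L^2(\mu_{K(U)})$. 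Because the leaves are connected and the $g_U$ agree on overlaps, these local functions glue to a measurable function on $S$ constant on each leaf, equivalently a holonomy-invariant element of $L^2(\mu_T)$ for a total transversal $T$. Conversely, any such holonomy-invariant $L^2$ function extends to a leaf-constant element of $\Omega^0_{L^2(\mu)}(S)$ (its partial derivatives in $x$ vanish, hence are trivially in $L^2$). This gives a canonical linear isomorphism
$$
H^0_{DR}(S_\mu)\;\cong\;\{\,g\in L^2(\mu_T)\,:\, g\text{ is holonomy-invariant}\,\}.
$$

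Next I would use the ergodic decomposition. If $d(\mu)=d<\infty$, then by the classical fact invoked right before the lemma, $\mu=\sum_{i=1}^d c_i\mu_i$ with $c_i>0$ and $\mu_i$ mutually singular ergodic transversal measures. Choose pairwise disjoint holonomy-invariant measurable sets $A_i\subset T$ with $\mu_i$ supported on $A_i$ (the holonomy-invariance of the $\mu_i$ lets us replace the measure-theoretic support by a genuinely invariant representative modulo $\mu_i$-null sets). The indicators $\mathbf{1}_{A_i}$ are holonomy-invariant, lie in $L^2(\mu_T)$ since $\mu_T$ is finite on compact transversals, and are linearly independent because $\mu_i(A_i)>0$ while $\mu_j(A_i)=0$ for $j\neq i$. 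This yields a lower bound $\dim H^0_{DR}(S_\mu)\geq d$.

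For the reverse inequality, let $g\in L^2(\mu_T)$ be holonomy-invariant. Since $\mu$ is concentrated on $\bigcup_i A_i$, it suffices to determine $g$ on each $A_i$. But $g|_{A_i}$ is a holonomy-invariant function in $L^2(\mu_i)$, and ergodicity of $\mu_i$ forces $g|_{A_i}=a_i$ to be constant $\mu_i$-a.e. Hence $g=\sum_{i=1}^d a_i\,\mathbf{1}_{A_i}$ in $L^2(\mu_T)$, proving $\dim H^0_{DR}(S_\mu)\leq d$. If $d(\mu)=\infty$, the definition supplies, for every $N\geq 1$, an $N$-tuple of mutually singular ergodic measures dominated by $\mu$, yielding $N$ independent indicators as above, so $\dim H^0_{DR}(S_\mu)=\infty$. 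Combining the two cases gives $d(\mu)=\dim_\RR H^0_{DR}(S_\mu)$, and specializing to $d(\mu)=1$ yields the ergodicity criterion.

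The main technical point is the first paragraph: making precise the passage between $f\in\Omega^0_{L^2(\mu)}(S)$ with $df=0$ and a genuinely holonomy-invariant $L^2$ function on a transversal in the a.e. sense, together with checking that the indicators $\mathbf{1}_{A_i}$ — though not continuous in $y$ — genuinely belong to $\Omega^0_{L^2(\mu)}(S)$. The latter is fine because $L^2(\mu)$-transversally smooth only requires smoothness in the leafwise variable $x$ (trivial here, since the function is constant in $x$) together with $L^2$ control in $y$, which holds as $\mu_T$ is finite on compact transversals.
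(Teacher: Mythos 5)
Your proposal is correct and follows essentially the same route as the paper: identify $H^0_{DR}(S_\mu)$ with leaf-constant (equivalently holonomy-invariant) $L^2(\mu)$ functions, get the lower bound from the indicators of the disjoint saturated supports of the ergodic components, and get the upper bound from the fact that ergodicity of each $\mu_i$ forces an invariant function to be $\mu_i$-a.e.\ constant (the paper proves this last fact explicitly via the distribution function $x\mapsto\mu_{i,T}(f^{-1}((-\infty,x]))$ being a $\{0,1\}$-valued Heaviside function, where you simply invoke it). The infinite-dimensional case is handled identically in both arguments.
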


\begin{proof}
  Let $d\geq 1$ be finite and $d\leq d(\mu)$. By definition,
  $\mu\geq\sum_{i=1}^{d} c_i \mu_i$, where $\mu_i$ are ergodic measures and
  $c_i>0$. % (this means that this decomposition holds in any transversal).
  Consider disjoint measurable subsets $S_i\subset S$ which are leaf-saturated, such that
  $S_i$ is of total measure for $\mu_i$ and of zero measure for any $\mu_j$ with $j\not= i$
  in any transversal. Then the characteristic functions $\chi_{S_i}$ %are $L^2(\mu_j)$-transversally,
  %for all $\mu_j$, and hence they are in $L^2(\mu)$. They
  are independent in $\Omega^0_{L^2(\mu)}(S)$,
  since $\mu(S_i)>0$ for all $i$: if $f=\sum \lambda_i\chi_{S_i}=0$ then $0=||f||^2=\int_{S_\mu}
  |f|^2 = \sum \lambda_i^2 \mu(S_i) \implies \lambda_i=0$ for all $i$. (Here we fix an
  auxiliary Riemannian metric and consider the daval measures corresponding to the
  transversal measures.)

  Thus $\dim_\RR H^0_{DR}(S_\mu) =\dim_\RR
  H^0(S, \underline \RR_\mu ) \geq d$. This proves the result when $d(\mu )=+\infty$.

  Now assume that $d(\mu)$ is finite. Write $\mu=\sum_{i=1}^{d} c_i \mu_i$, where
  $\mu_i$ are ergodic measures, $d=d(\mu)$. Let $S_i$ be as before. Let us prove that
  the $(\chi_{S_i})$ do generate
  $H^0(S, \underline \RR_\mu )$.

  Let $f$ be any measurable function locally constant on leaves
  and $L^2(\mu)$-transversally.
  Note that $f$ is automatically $L^2(\mu_j)$-transversally.
For a transversal $T$, the real function $x\mapsto \mu_{j,T} (T\cap f^{-1} ((-\infty , x]))$ is a non-decreasing
Heaviside function taking the values $0$ and $1$ (by ergodicity of $\mu_j$). So this function has a jump at some
$x_j\in\RR$ that is independent of the transversal $T$ (as holonomy shows).
Then $F_{x_j}=f^{-1}(x_j)$ has total $\mu_j$-measure.
So $F_{x_j}=S_j$ up to a set of $\mu_j$-measure zero, thus
of $\mu$-measure zero. That means that $f$ is constant along each $S_j$ up to a set of
$\mu$-measure zero. So $f= \sum x_j \chi_{S_j}$ in $L^2(\mu)$
(note that $S-(\cup S_j)$ is of zero $\mu$-measure).
\end{proof}

We review basic properties of the De Rham $L^2$-cohomology:
\begin{enumerate}
 \item There is not cup product, and therefore the $H^*_{DR}(S_\mu)$ are just vector spaces (not rings).
 \item Functoriality. If $f:S_1\to S_2$ is a solenoidal map, then we require that $\mu_2=f_* \mu_1$.
 This means that for any local transversal $T_1$ of $S_1$, $f(T_1)$ is a local transversal of $S_2$ and
 the transported measure $f_* \mu_1$ is a constant multiple of $\mu_2$ on the transversal. Note that
 this is automatic when the solenoids are uniquely ergodic. Then for any form $\omega$ which is
 $L^2(\mu_2)$-transversally smooth we have that  $f^*\omega$ is $L^2(\mu_1)$-transversally smooth.

 \item Mayer-Vietoris. It holds exactly as in subsection \ref{subsec:DrRham}.
 \item Poincar\'{e} duality. We shall see that it holds for
 the reduced $L^2$-cohomology for oriented ergodic solenoids (see corollary \ref{cor:PD}).
\end{enumerate}

%%%%%%%%%%%%%%%%%%%%%%%%%%%%%%%%%%%%%%%%%%%%%%%%%%%%%%%%%%%%%%%%%%%%%
\section{Bundles over solenoids}\label{sec:bundles-solenoids}
%%%%%%%%%%%%%%%%%%%%%%%%%%%%%%%%%%%%%%%%%%%%%%%%%%%%%%%%%%%%%%%%%%%%%

Let $S$ be a $k$-solenoid. A vector bundle of rank $n$ over $S$ consists
of a $(k+n)$-solenoid $E$ and a projection map $\pi:E\to S$ satisfying
the following condition: there
is an open covering $U_\alpha$ for $S$, and solenoid isomorphisms $\psi_\a:
E_\alpha=\pi^{-1}(U_\alpha) \stackrel{\cong}{\to} U_\a\x \RR^n=D^k\x K(U_\a)\x \RR^n$, such that
$\pi=pr_1\circ\psi_\a$, where $pr_1: U_\a\x \RR^n\to U_\a$ denotes the projection, and the transition functions
 $$
 \psi_\a\circ\psi_\b^{-1}: (U_\beta\cap U_\a )\x \RR^n \to
 (U_\beta\cap U_\a )\x \RR^n
 $$
are of the form $(x,y,v)\mapsto (x, y,g_{\a\b}(x,y)(v))$, where $g_{\a\b}$ is a
$C^{\infty,0}$-smooth function
from $U_\a\cap U_\b$ to $\GL(n)$.

Some points are easy to check:
\begin{enumerate}
 \item The usual constructions of vector bundles remain valid here:
 direct sums, tensor products, symmetric and anti-symmetric
 products. Also there are notions of sub-bundle and of quotient
 bundle.
 \item A section of a bundle $\pi:E\to S$ is a map $s:S\to E$ such that $\pi\circ s=Id$.
 We denote the space of sections as $\Gamma(E)$. By definition these are maps of
class $C^{\infty,0}$.
 \item If $S_\mu$ is a measured solenoid, and $E\to S$ is a vector bundle, then
 we have the notion of sections which are $L^2(\mu)$-transversally smooth.
 Locally, in a chart $E_\a=D^k\x K(U)  \x \RR^n \to U_\a=D^k\x K(U)$, the section is
 written $s(x,y)=(x,y,v(x,y))$. We require that $v$ is $C^\infty$ on $x$ and
 $L^2(\mu)$ on $y$. This does not depend on the chosen trivialization.

 \item If $f:S_1\to S_2$ is a solenoid map, and $\pi:E\to S_2$ is a vector bundle,
then the pull-back $f^*E=\{(p,v)\in S_1\x E \ | \ f(p)=\pi(v)\}$
is naturally a vector bundle over $S_1$.
 \item The tangent bundle $TS$ of $S$ is an example of vector bundle.
We have bundles of $(p,q)$-tensors $TS^{\ox p}\ox
(TS^*)^{\ox q}$ on any solenoid $S$. In particular, we have
bundles of $p$-forms (anti-symmetric contravariant tensors)
$\bigwedge^p T^*S$. Its sections are the $p$-forms $\Omega^p (S)$.
 \item A metric on a bundle $E$ is a section of $\Sym^2(E^*)$ which
is positive definite at every point. A metric on $S$ is a metric
on the tangent bundle. An orientation of a bundle $E$ is a continuous choice of
orientation for each of the fibers of $E$. An orientation of $S$
is an orientation of its tangent bundle.
\end{enumerate}

We define $\Omega^p(E)=\Gamma(\bigwedge^p  T^*S \ox E)$.
A connection on a vector bundle $E\to S$ is a map
 $$
 \nabla:\Gamma(E)\to \Omega^1(E),
 $$
such that $\nabla (f\cdot s)=
f\nabla s + df \wedge s$.
Consider a local trivialization in a flow-box $U_\a$ with coordinates $(x,y)$. Then
$\nabla|_{U_\a}=d+a_\a$, where
$a_\a \in \Omega^1(U_\a, \End E)$. Under a change of trivialization $g_{\a\b}$, for two trivializing
open subsets $U_\a, U_\b$, we have the usual formula $a_\b=
g_{\a\b}^{-1} a_\a g_{\a\b} + g_{\a\b}^{-1} d g_{\a\b}$.

A partition of unity argument proves that there are always connections on
a vector bundle $E\to S$.
The space of connections is an affine space over $\Omega^1(\End E)$.

Given a connection $\nabla$ on $E$, there is a unique map
$d_\nabla: \Omega^p(E)\to \Omega^{p+1}(E)$, $p\geq 0$, such that
%$d_\nabla (f)= d f$ for a function $f$,
$d_\nabla s= \nabla s$ for $s\in \Gamma(E)$, and
 $d_\nabla(\alpha\wedge \beta)= d \alpha\wedge \b+(-1)^p  \a\wedge
 d_\nabla \b$, for $\a\in\Omega^p(S)$, $\b\in \Omega^q(E)$.
It is easy to see that $\hat{F}_\nabla:\Gamma(E)\to \Omega^2(E)$,
given by $\hat{F}_\nabla (s)= d_\nabla d_\nabla s$, has a tensorial character
(i.e., it is linear on functions). Therefore there is a $F_\nabla
\in \Omega^2(\End E)$, called \emph{curvature} of $\nabla$, such
that $\hat{F}_\nabla (s)=F_\nabla \cdot s$. Locally on a trivialization
$U_\a$, we have the formula $F_\nabla =da_\a +
a_\a\wedge a_\a$.

Given connections on vector bundles, there are induced
connections on associated bundles (dual bundle, tensor product,
direct sum, symmetric product, pull-back under a solenoid map, etc.).
This follows in a straightforward way from the standard theory.
In particular, if $l\inc S$ is a leaf of a solenoid $S$, then
we can perform
the pull-back of the bundle and connection to the leaf, which consists
on restricting them to $l$. This gives
a bundle and connection of a complete $k$-dimensional manifold.
Also, if $f:S\to M$ is an immersion of a solenoid in a smooth $n$-manifold,
and $E\to M$ is a bundle with connection, then the pull-back construction
produces a bundle with connection on $S$.

Consider a vector bundle $E\to S$ endowed with a
metric. We say that a connection $\nabla$ is compatible with the metric if it
satisfies
  $$
  d\la s,t \ra = \la \nabla s,t\ra + \la s,\nabla t\ra\, .
  $$
In the particular case of the tangent bundle $TS$ of a Riemannian solenoid $S$,
we have the Levi-Civita connection $\nabla^{LC}$, which is the unique connection
compatible with the metric and with torsion $T_{\nabla}(X,Y)=\nabla_X Y- \nabla_Y X=0$.
This is the Levi-Civita connection on each leaf, and the transversal continuity
follows easily.

%%%%%%%%%%%%%%%%%%%%%%%%%%%%%%%%%%%%%%%%%%%%%%%%%%%%%%%%
\section{Chern classes} \label{sec:chern-classes}
%%%%%%%%%%%%%%%%%%%%%%%%%%%%%%%%%%%%%%%%%%%%%%%%%%%%%%%%

We can also define a complex vector bundle over a solenoid, by
using $\CC^n$ as fiber, and taking the transition functions with
values in $\GL(n,\CC)$. An hermitian metric on a complex vector bundle is a
positive definite hermitian form in each fiber with
smoothness of type $C^{\infty, 0}$ on any local trivialization.

Let $E\to S$ be a complex vector bundle over a solenoid of rank $n$. Put a hermitian
structure on $E$, and consider any hermitian
connection $\nabla$ on $E$. Then the curvature $F_\nabla$ is a $2$-form
with values in $\End E$, i.e. $F_\nabla \in \Omega^2(\End E)$.
The Bianchi identity says
 $$
 d_\nabla F_\nabla =0\, .
 $$
This holds leaf-wise, so it holds on the solenoid.

Consider the elementary functions: $\Tr_i: M_{r\times r} \to \CC$, given  by
$\Tr_i(A)=\Tr(\bigwedge^i A)$. Then the Chern classes are
 $$
  c_i(E)=\Big[\Tr_i \Big(\frac{\sqrt{-1}}{2\pi} F_\nabla\Big)\Big] \in H^{2i}_{DR}(S)\, .
 $$
These classes are well defined (since the forms inside are closed, which
again follows by working on leaves) and
do not depend on the connection (different connections give forms
differing by exact forms), see \cite[Chapter III]{Wells}.

We have some facts:
\begin{enumerate}
 \item If $M$ is a manifold, we recover the usual Chern classes.
 \item If $f:S_1\to S_2$ is a solenoid map, then $f^*c_i(E)=c_i(f^*E)$. In particular,
\begin{itemize}
 \item If $f:S\to M$ is an immersion of a solenoid in a manifold and
 $E|_S=f^*E$, then $c_i(E|_S)=f^*c_i(E)$.
 \item If $j:l\to S$ is the inclusion of a leaf, then $c_i(E|_l)= j^* c_i(E)$.
\end{itemize}
\end{enumerate}

\begin{question}
Are the Chern classes defined as elements
in $H^{2i}(S,\underline\ZZ)$?
\end{question}

This question has an affirmative answer for the case of line bundles (complex vector bundles of rank $1$).
A line bundle $L\to S$ is given by its transition functions $g_{\alpha\beta}:
U_{\alpha}\cap U_{\beta} \to S^1$ which are of class $C^{\infty,0}$. Therefore the line bundles on
$S$ are parametrized by $H^1(S,C^{\infty,0}(S^1))$, where $C^{\infty,0}(S^1)$ is the sheaf which
assigns $U\mapsto C^{\infty,0}(U, S^1)$.

Note that there is an exact sequence of sheaves:
 $$
  0 \to \underline\ZZ \to C^{\infty,0}(\RR) \to C^{\infty,0}(S^1) \to 0 \, .
  $$
Here the sheaf $\underline\ZZ$ is the locally constant sheaf.
As $C^{\infty,0}(\RR)$ is a fine sheaf (it has partitions of unity), it is
acyclic. So the map
 $$
 \delta: H^1(S,C^{\infty,0}(S^1)) \to H^2(S, \underline\ZZ)
 $$
is an isomorphism. There is a natural map $\alpha:H^2(S,\underline\ZZ)\to H^2(S,\underline\RR_c)$.
It is easy to see that $\alpha(\delta ([L]))=c_1(L)$, by using the transition functions $g_{\a\b}$
to construct a suitable connection on $L$ with which we compute the curvature (as in the case of
manifolds, see \cite{Wells}).

%%%%%%%%%%%%%%%%%%%%%%%%%%%%%%%%%%%%%%%%%%%%%%%%%%%%%%%%
\section{Hodge theory} \label{sec:hodge-theory}
%%%%%%%%%%%%%%%%%%%%%%%%%%%%%%%%%%%%%%%%%%%%%%%%%%%%%%%%

\subsection{Sobolev norms}

Let $S_\mu$ be a compact Riemannian $k$-solenoid which is oriented and endowed with
a transversal measure. We denote the associated (finite) daval measure also by $\mu$.
Now consider a vector bundle $E\to S$ and endow it with a
metric. The space of sections of class $C^{\infty,0}$ is denoted $\Gamma(S,E)$. The space
of $L^2(\mu)$-transversally smooth sections (sections of class $C^\infty$ along leaves and
$L^2$ in the transversal directions) is denoted by $\Gamma_{L^2(\mu)}(S,E)$.

Now let us introduce suitable completions of these spaces of sections.
Fix a connection $\nabla$ for $E$ and the Levi-Civita connection for
$TS$. There is an $L^2$-norm on sections of $E$, given by
 $$
 (s,t)_E= \int_{S} \la s,t\ra \, d\mu \, .
 $$
We can complete the spaces of sections to obtain spaces
of $L^2$-sections $L^2_\mu (S,E)$.
We consider also Sobolev norms $W^{l,2}$ as follows.
Take $s$ a section of $E$. Then we set
 $$
 ||s||_{W^{l,2}_\mu}^2=\int_{S}
 \sum_{i=0}^l |\nabla^i s|^2 \, d\mu \, .
 $$
Completing with respect to this norm gives a
Hilbert space consisting of sections with regularity $W^{l,2}$ on leaves
and $L^2(\mu)$-transversally, denoted $W^{l,2}_\mu(S,E)$. These spaces
do not depend on the choice of metrics and connections.

For future use, we also introduce the norms $C^r_\mu$, which give spaces
of sections with $C^r$-regularity on leaves and $L^2(\mu)$-transversally.
Take $s$ a section of $E$. Assume
it has support in a flow box $U=D^k\x K(U)$, and assume that $E$ has been trivialized
by an orthonormal frame. Then
 $$
 ||s||_{C^r_\mu}^2=\int_{K(U)}
 ||s( \cdot, y) ||_{C^r}^2 \, d\mu_{K(U)}(y) \, .
 $$
These norms are patched (via partitions of unity,
in a non-canonical way) to get a norm on the
spaces of sections on the whole solenoid. The topology defined by this norm is independent of the partition of unity.
The spaces of sections are denoted $C^r_\mu(S,E)$.
Note that $\bigcap_{r\geq 0} C^r_\mu(S,E)={L^2_\mu}(S,E)$.

We can define the norm $W^{l,2}_\mu$ by using Fourier transforms.
For this we have to restrict to a flow-box $U=D^k\x K(U)$.
We Fourier-transform the section $s(x,y)$ in the leaf-wise
directions, to get $\hat{s}(\xi,y)$, and then take the integral
 $$
 \int_{K(U)} \left(\int  (1+|\xi|^2)^l |\hat{s}(\xi,y)|^2 d\xi \right) d\mu_{K(U)}(y).
 $$

\begin{proposition}[Sobolev]
  $W^{s,2}_\mu(S,E) \subset C^p_{\mu}(S,E)$, for $s>[k/2] + p + 1$.
\end{proposition}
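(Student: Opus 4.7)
The plan is to reduce the statement to the classical leafwise Sobolev embedding on $\RR^k$, applied pointwise in the transversal variable $y$, and then to integrate in $y$ against $\mu_{K(U)}$. This is possible because the norm $W^{s,2}_\mu$ was defined in the previous paragraph precisely as the integral over $K(U)$ of the classical Sobolev $W^{s,2}$ norm of $s(\cdot,y)$ along the leaf (via the Fourier characterization), and the norm $C^r_\mu$ is similarly the integral of the leafwise $C^r$ norm.

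First I would cover $S$ by finitely many good flow-boxes $U_i = D^k\x K(U_i)$ on which $E$ is trivialized by orthonormal frames, and fix a subordinate partition of unity $\{\rho_i\}$. By the triangle inequality and the patching definitions of both norms, it suffices to prove the estimate for a section supported in a single flow-box $U = D^k\x K(U)$, after multiplying by $\rho_i$. So I reduce to showing: for almost every $y\in K(U)$,
\[
  \|s(\cdot,y)\|_{C^p(D^k)} \;\leq\; C\, \|s(\cdot,y)\|_{W^{s,2}(D^k)},
\]
with a constant $C$ depending only on $k,p,s$ and the fixed metric and trivialization data on the closure of $U$ (in particular not on $y$). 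Squaring this inequality and integrating against $d\mu_{K(U)}(y)$ yields $\|s\|_{C^p_\mu}\leq C\|s\|_{W^{s,2}_\mu}$.

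For the pointwise inequality I would use the standard Fourier-analytic argument on $\RR^k$ (applicable because we are working inside $D^k$ and can extend leafwise by a cutoff): for any multi-index $\alpha$ with $|\alpha|\leq p$,
\[
  |\partial^\alpha_x s(x,y)| \;\leq\; \Big(\!\int (1+|\xi|^2)^s |\hat s(\xi,y)|^2\, d\xi\Big)^{\!1/2}
  \Big(\!\int (1+|\xi|^2)^{|\alpha|-s}\, d\xi\Big)^{\!1/2},
\]
by Cauchy--Schwarz applied to $\partial^\alpha_x s(x,y)=\int (i\xi)^\alpha \hat s(\xi,y)e^{ix\cdot\xi}d\xi$. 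The second integral is finite exactly when $s-|\alpha|>k/2$, which under the hypothesis $s>[k/2]+p+1$ holds uniformly for all $|\alpha|\leq p$. Taking the supremum over $x\in D^k$ and over $|\alpha|\leq p$ gives the desired leafwise bound, with a constant independent of $y$.

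The main technical obstacle is purely cosmetic: the uniform-in-$y$ character of the constant $C$, and the passage from the leafwise Fourier-based Sobolev norm to the connection-based norm $\|s\|_{W^{l,2}_\mu}^2=\int_S\sum|\nabla^is|^2 d\mu$ that defines $W^{l,2}_\mu$. Uniformity in $y$ follows because the section is supported in the closed flow-box $\overline U$, where the metric and the orthonormal frame on $E$ vary continuously in $y$ on the compact set $\overline{K(U)}$. The equivalence of the two definitions of the leafwise Sobolev norm is the classical fact on the $k$-dimensional leaves, and after integrating in $y$ it yields the equivalence of the global norms used to define $W^{l,2}_\mu(S,E)$; hence the conclusion is independent of the choice of connections and metrics, as was already asserted.
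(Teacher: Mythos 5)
Your proof is correct and is essentially the paper's own argument: the paper simply cites Proposition 1.1 of Chapter IV of Wells and says the proof ``carries over verbatim,'' and what you have written is precisely that carried-over proof --- the leafwise Fourier--Cauchy--Schwarz estimate with a constant uniform in $y$, squared and integrated against $d\mu_{K(U)}(y)$, which is why the paper introduces the Fourier-transform description of $W^{l,2}_\mu$ just before the statement. No gaps; your remarks on uniformity in $y$ and on the equivalence of the connection-based and Fourier-based norms are exactly the points the paper leaves implicit.
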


This is similar to Proposition 1.1 in Chapter IV of \cite{Wells}.
The proof carries over to the solenoid situation verbatim.
As a consequence,
  $$
  \bigcap_{r\geq 0} W^{r,2}_\mu(S,E)= \Gamma_{L^2(\mu)}(S,E)\,.
  $$

\subsection{Pseudodifferential operators}
Let $E,F$ be two vector bundles over $S$ of ranks $n,m$ respectively. A differential operator $L$ of
order $l$ is an operator
 $$
 L:\Gamma(S,E) \to \Gamma(S,F)
 $$
which locally on a flow-box $U=D^k\x K(U)$ is of the form
 $$
 L (s)= \sum_{|\alpha|\leq l} A_\alpha (x,y) D^\alpha s \, ,
 $$
where $A_\alpha$ are $(n\x m)$-matrices of functions (with regularity $C^{\infty,0}$)
and $\alpha=(\alpha_1,\ldots, \alpha_k)$ is a multi-index, with
$|\alpha|=\sum \alpha_i$, $D^\alpha=\frac{\bd^{|\alpha|}}{\bd^{\alpha_1} x_1
\ldots \bd^{\alpha_k} x_k}$.
Note that a differential operator gives rise to differential operators on each leaf.
Moreover, $L$ extends to
 $$
 L:{W^{p,2}_\mu} (S,E) \to {W^{p-l,2}_\mu} (S,F)\, .
 $$
The usual properties, like the existence of adjoints, extend to this setting.

The symbol of a differential operator on a solenoid is defined in the same fashion as
for the case of manifolds, and coincides with the symbol of the differential operator
on the leaves. We recall that the symbol
$\sigma_l(L) \in \Hom(\pi^*E, \pi^*F)$, $\pi:TS\to S$, has the form
 $$
 \sigma_l (L)(x,y, v)= \sum_{|\alpha|=l} A_\alpha(x,y) v_1^{\alpha_1}\ldots  v_k^{\alpha_k}\, .
 $$
The properties of the symbol map, such as the rule of the symbol
of the composition of differential operators, or the symbol of the adjoint, hold here.
This is just the fact that they can be done leaf-wise, and the continuous transversality
is easy to check.

Differential operators can be generalized to pseudodifferential operators as in the
case of manifolds. A pseudodifferential operator of order $l$ on a flow-box
$U=D^k\x K(U)$ is an operator
  $$
  L(p): \Gamma_c(U,E) \to \Gamma(U,F)
  $$
which sends a (compactly supported) section $s(x,y)$ to
 $$
 L(p) s (x,y)= \int p(x,\xi, y) \hat{s}(\xi,y) e^{i \la x,\xi \ra} d \xi \, ,
 $$
where $\hat{s}(\xi,y)$ is the (leaf-wise) Fourier transform, and $p(x,\xi, y)$
is a function defined in $D^k \x \RR^k \x K(U)$, smooth on $x$ and $\xi$, continuous on $y$,
and satisfying:
\begin{itemize}
 \item   $|D^\beta_x D^\alpha_\xi p(x,\xi,y)| \leq C_{\a\b\, l} (1 +|\xi|)^{l-|\a|}$, for constants $C_{\a\b\, l}$,
\item the limit $\sigma_l(p)(x,\xi,y)= \lim_{\lambda \to \infty} \frac{p(x,\lambda \xi,y)}{\lambda^l}$
 exists,
 \item $p(x,\xi,y) - \sigma_l(p) (x,\xi,y)$ should be of order $\leq l-1$ for $|\xi|\geq 1$.
\end{itemize}

A pseudodifferential operator of order $l$ on $S$ is an operator $L:\Gamma(S,E) \to \Gamma(S,F)$
which is locally of the form
$L(p_U)$ for some $p_U$ as above. The symbol of $L$ is $\sigma_l(L)=\sigma_l(p_U)$ for a local representative
$L|_U=L(p_U)$. This symbol is well-defined and independent of choices, which is a delicate point but it
is analogous to the case of manifolds (see \cite{Wells}).
The usual properties of the symbol map (composition, adjoint) hold here.

A pseudodifferential operator of order $l$ is an operator of order $l$, i.e.,
it extends as a continuous map to
 $$
 L :{W^{p,2}_\mu} (S,E) \to {W^{p-l,2}_\mu} (S,E)\, .
 $$
This is done as in Theorem 3.4 of \cite[Ch. IV]{Wells}, by noting that
$|| L(p) s ( \cdot, y)||_{W^{p-l,2}_\mu} \leq C || s(\cdot, y)||_{W^{p,2}_\mu}$,
where $C$ is a constant depending on $C_{\a\b\, l}$.

The key of the theory is the fact that we can construct a pseudodifferential operator
given a symbol $\sigma_l(L)$.
\begin{proposition}\label{prop:symbol}
  Let $S$ be a compact solenoid. Then
  there is an exact sequence $0 \to \mathrm{OP}_{l-1}(E,F) \to
  \mathrm{PDiff}_l(E,F) \to \mathrm{Symb}_l(E,F)\to 0$, where
  $\mathrm{OP}_{l-1}(E,F)$ is the space of operators of order $l-1$,
 $\mathrm{PDiff}_l(E,F)$ the space of pseudodifferential operators of order $l$,
 and $\mathrm{Symb}_l(E,F)$ the space of symbols of order $l$.
\end{proposition}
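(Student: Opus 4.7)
The plan is to adapt the classical construction from \cite[Ch.\ IV]{Wells}, exploiting that on a solenoid the Fourier transform and the symbol calculus act only in the leaf-wise directions, with the transversal parameter $y$ entering as a continuous parameter. Compactness of $S$ (needed to patch finitely many flow-boxes) replaces the compactness hypothesis in the manifold case.

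I would first dispose of exactness at the two endpoints. That $\sigma_l$ vanishes on $\mathrm{OP}_{l-1}(E,F)$ is built into the definition: a symbol of order $\leq l-1$ has zero $l$-homogeneous part. Conversely, if $L\in\mathrm{PDiff}_l(E,F)$ satisfies $\sigma_l(L)=0$, then locally $L|_U=L(p_U)$ with $\sigma_l(p_U)=0$, and the third condition in the definition of a symbol forces $p_U$ itself to satisfy the order $l-1$ estimate for $|\xi|\geq 1$. The part of $p_U$ supported on $|\xi|\leq 1$ is smoothing, so by the Sobolev-type estimate already established for pseudodifferential operators, $L$ extends continuously $W^{p,2}_\mu(S,E)\to W^{p-l+1,2}_\mu(S,F)$; that is, $L\in\mathrm{OP}_{l-1}(E,F)$.

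The main step is surjectivity. Given $\sigma\in\mathrm{Symb}_l(E,F)$, cover $S$ by finitely many flow-boxes $\{U_i\}$ over which $E$ and $F$ are trivialized and take a $C^{\infty,0}$-partition of unity $\{\rho_i^2\}$ subordinate to $\{U_i\}$. Fix a smooth cutoff $\chi:\RR^k\to\RR$ with $\chi\equiv 0$ near $0$ and $\chi\equiv 1$ for $|\xi|\geq 1$, and on each flow-box set $p_i(x,\xi,y)=\chi(\xi)\,\sigma(x,\xi,y)$. The three symbol conditions for $p_i$ follow from the $l$-homogeneity of $\sigma$ together with its $C^{\infty,0}$-regularity, so $L(p_i)$ is a pseudodifferential operator of order $l$ on $U_i$. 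Glue by
$$L(s)\,=\,\sum_i \rho_i\cdot L(p_i)\bigl(\rho_i\, s\bigr).$$
Since multiplication by $\rho_i$ is a pseudodifferential operator of order $0$ with principal symbol $\rho_i$, and the symbol map is multiplicative modulo lower order, $\sigma_l(L)=\sum_i \rho_i^2\,\sigma=\sigma$.

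The main obstacle is the invariance of the symbol under changes of flow-box, needed in order to make $\sigma_l(L)$ globally well-defined and to know that $L(p_i)$ depends (up to lower order) only on the intrinsic object $\sigma|_{U_i}$. This is the standard stationary-phase analysis along the leaves, as in the proof of the analogous result in \cite[Ch.\ IV]{Wells}. For solenoids the changes of charts have the form $\varphi_{ij}(x,y)=(X(x,y),Y(y))$ with $X$ smooth in $x$, so all the $x$-differentiations in the stationary-phase expansion go through unchanged; the only dependence on $y$ is by composition with continuous functions, which preserves the $C^{\infty,0}$-regularity at every step. Thus no transversal smoothness of the transition functions is required, and the argument of \cite{Wells} carries over verbatim.
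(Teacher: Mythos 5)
Your argument is correct and is essentially the proof the paper intends: the paper gives no explicit proof of this proposition, deferring to the classical construction in \cite[Ch.~IV]{Wells}, and your adaptation (exactness at the ends from the definition of the symbol and the order estimates, surjectivity via the cutoff $\chi(\xi)\sigma$ glued by $\sum_i \rho_i L(p_i)(\rho_i\,\cdot)$ over a finite flow-box cover) is exactly that construction. Your remark that the stationary-phase analysis only differentiates in the leaf directions, so that $C^{\infty,0}$-regularity of the transition functions suffices, is the one genuinely solenoid-specific point, and it matches the paper's own comment that well-definedness of the symbol is ``delicate but analogous to the case of manifolds.''
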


\subsection{Elliptic operator theory for solenoids}

We say that a pseudodifferential operator $L:E\to F$ of order $l$ is elliptic if the symbol
$\sigma_l(L)$ satisfies that $\sigma_l(L)(x,v):E_x\to F_x$ is an isomorphism for each
$x\in S$, $v\in T_xS$, $v\neq 0$.

\begin{theorem} \label{thm:ll}
 Let $L$ be an elliptic pseudodifferential operator of order $l$. Then there exists a
 pseudo-inverse, a pseudodifferential operator $\tilde{L}$ of order $-l$ such that
 $L\circ \tilde{L} = Id+K_1$ and $\tilde{L}\circ L = Id+K_2$, where $K_1,K_2$ are operators
 of order $-1$.
\end{theorem}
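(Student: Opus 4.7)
The plan is to follow the classical pseudo-inverse construction, leveraging the exact sequence of Proposition~\ref{prop:symbol}. The strategy has two stages: invert the principal symbol $\sigma_l(L)$ to produce a candidate inverse symbol $\tilde\sigma$ of order $-l$, then quantize it via Proposition~\ref{prop:symbol} and verify the two composition identities modulo operators of order $-1$.

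First, by ellipticity, $\sigma_l(L)(x,y,v)\colon E_x\to F_x$ is invertible for every nonzero $v\in T_xS$. I would pick a smooth cutoff $\chi\colon [0,\infty)\to[0,1]$ with $\chi(t)=0$ for $t\leq 1/2$ and $\chi(t)=1$ for $t\geq 1$, and set
$$
\tilde\sigma(x,y,v) = \chi(|v|)\,\sigma_l(L)(x,y,v)^{-1},
$$
extended by $0$ at $v=0$. Matrix inversion is a smooth operation on invertible matrices, so the $C^{\infty,0}$-regularity of $\sigma_l(L)$ (smooth in $(x,v)$, continuous in $y$) is inherited by $\tilde\sigma$. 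The homogeneity $\sigma_l(L)(x,y,\lambda v)=\lambda^l\sigma_l(L)(x,y,v)$ for $\lambda>0$ then yields the decay bounds $|D_x^\beta D_v^\alpha\tilde\sigma|\leq C_{\alpha\beta}(1+|v|)^{-l-|\alpha|}$ required of an order $-l$ symbol, and the limit $\lim_{\lambda\to\infty}\tilde\sigma(x,y,\lambda v)/\lambda^{-l}$ equals $\sigma_l(L)(x,y,v)^{-1}$. Thus $\tilde\sigma\in\mathrm{Symb}_{-l}(F,E)$.

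Second, I would invoke surjectivity in Proposition~\ref{prop:symbol} at order $-l$ to lift $\tilde\sigma$ to a pseudodifferential operator $\tilde L\in\mathrm{PDiff}_{-l}(F,E)$ with $\sigma_{-l}(\tilde L)=\sigma_l(L)^{-1}$, patching local constructions by a partition of unity (using compactness of $S$). The multiplicativity of the symbol map under composition, imported from the local theory, gives
$$
\sigma_0(L\circ\tilde L) = \sigma_l(L)\cdot\sigma_{-l}(\tilde L) = \mathrm{Id}_F,
$$
and similarly $\sigma_0(\tilde L\circ L)=\mathrm{Id}_E$. Since $\sigma_0(\mathrm{Id})=\mathrm{Id}$ as well, the differences $K_1:=L\circ\tilde L-\mathrm{Id}$ and $K_2:=\tilde L\circ L-\mathrm{Id}$ both have vanishing principal symbol. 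Applying the exact sequence of Proposition~\ref{prop:symbol} at order $0$, both $K_1$ and $K_2$ lie in $\mathrm{OP}_{-1}$, which is precisely what is claimed.

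The algebraic skeleton is identical to the classical case treated in \cite[Ch.~IV]{Wells}. The one real point that requires attention — and which I would expect to be the main obstacle — is the transversal bookkeeping: checking that symbol inversion, the quantization in Proposition~\ref{prop:symbol}, the composition formula for symbols, and the partition of unity patching all preserve the $C^{\infty,0}$ regularity intrinsic to the solenoid (smooth along leaves, merely continuous in the transversal variable $y$). This works because matrix inversion and symbol composition are polynomial/rational in the entries on the relevant open sets, and therefore respect continuous $y$-dependence; once Proposition~\ref{prop:symbol} is granted, no further analytic input is needed beyond the leafwise symbol calculus.
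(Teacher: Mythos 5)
Your proposal is correct and follows essentially the same route as the paper, which simply invokes Theorem 4.4 of \cite[Ch.~IV]{Wells} together with Proposition \ref{prop:symbol}: invert the principal symbol (with a cutoff near $v=0$), quantize via the surjectivity of the symbol map, and use multiplicativity of symbols plus the exact sequence at order $0$ to see that the remainders have order $-1$. Your closing remark about the transversal $C^{\infty,0}$ bookkeeping is exactly the point the paper leaves implicit, and your write-up is in fact more detailed than the paper's.
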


This is done as in Theorem 4.4 of \cite[Ch.\ IV]{Wells}.
The basic idea is to construct a pseudo-inverse by using Proposition \ref{prop:symbol}.
Note that $K_1, K_2$ are not usually compact operators (this is due to the failure of
the Rellich lemma in our situation), so we will not have finite-dimensionality of the
kernel and cokernel of elliptic operators.

\begin{corollary}
 Let $L$ be an elliptic pseudodifferential operator of order $l$, and let $\cK_{L_s}=\ker
( L: {W^{s,2}_\mu}(S,E)\to {W^{s-l,2}_\mu}(S,F))$. Then $\cK_{L_s}\subset \Gamma_{L^2(\mu)}(S,E)$,
and it is independent of $s$.
\end{corollary}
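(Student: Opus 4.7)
The plan is to run the standard elliptic regularity bootstrap, using the pseudo-inverse from Theorem \ref{thm:ll} as the only analytic input. The Rellich-type step that would normally give finite dimensionality is precisely what fails here, but it is not needed for regularity of solutions, only the order $-1$ error $K_2$ is.

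First I would fix $s\in\cK_{L_s}$, so $Ls=0$, and apply $\tilde L$ to obtain $\tilde L Ls = 0$, that is $s = -K_2 s$. Since $K_2$ is a pseudodifferential operator of order $-1$, it extends to a bounded map $W^{s,2}_\mu(S,E)\to W^{s+1,2}_\mu(S,E)$ by the mapping property for pseudodifferential operators (the analogue of Theorem 3.4 in \cite[Ch.\ IV]{Wells} that was recorded just before Proposition \ref{prop:symbol}). Thus the identity $s=-K_2s$ upgrades the regularity by one derivative: $s\in W^{s+1,2}_\mu(S,E)$.

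Next I would iterate: having $s\in W^{s+j,2}_\mu$, the same identity $s=-K_2 s$ gives $s\in W^{s+j+1,2}_\mu$. By induction $s\in W^{r,2}_\mu(S,E)$ for every $r\geq 0$, hence
$$
s\in\bigcap_{r\geq 0} W^{r,2}_\mu(S,E)=\Gamma_{L^2(\mu)}(S,E),
$$
using the Sobolev consequence recorded after the Sobolev proposition. This proves the inclusion $\cK_{L_s}\subset \Gamma_{L^2(\mu)}(S,E)$.

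For independence of $s$, if $s_1\leq s_2$ then $W^{s_2,2}_\mu\hookrightarrow W^{s_1,2}_\mu$ gives $\cK_{L_{s_2}}\subset \cK_{L_{s_1}}$, while the regularity statement just proved shows any element of $\cK_{L_{s_1}}$ lies in $\Gamma_{L^2(\mu)}(S,E)\subset W^{s_2,2}_\mu$, hence in $\cK_{L_{s_2}}$. The only mildly delicate point is checking the mapping property of $K_2$ across the full scale of Sobolev spaces in the solenoid setting, but since the norm $W^{l,2}_\mu$ was defined transversally by integrating the leaf-wise $W^{l,2}$-norm against $\mu$, this reduces, via a partition of unity on flow-boxes and Fubini, to the classical Euclidean estimate on the leaves, which is exactly how the order of a pseudodifferential operator on $S$ was defined. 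There is no further obstacle; in particular we do not need $K_2$ to be compact.
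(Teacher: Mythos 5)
Your argument is correct and is essentially identical to the paper's proof: apply the pseudo-inverse to $Ls=0$ to get $s=-K_2s$, bootstrap through the Sobolev scale using that $K_2$ has order $-1$, and conclude $s\in\bigcap_{r\geq 0}W^{r,2}_\mu(S,E)=\Gamma_{L^2(\mu)}(S,E)$. The paper treats the independence in $s$ as immediate; your explicit two-inclusion argument for it is the same observation spelled out.
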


\begin{proof}
Let $\sigma \in W^{2,s}_\mu(S,E)$ such that $L\sigma =0$. Then $\sigma =(\tilde{L}\circ L- K_2)(\sigma)
=-K_2\sigma \in W^{2,s+1}_\mu(S,E)$. Working inductively, $\sigma\in \cap_{r\geq 0} W^{2,r}_\mu(S,E)=
\Gamma_{L^2(\mu)}(S,E)$. The second assertion is clear.
\end{proof}

%Note that the cokernel of $L$ is naturally isomorphic to the kernel of $L^*$.
An operator $L:\Gamma(E) \to \Gamma(E)$ is called self-adjoint if $L^*=L$.
If $L$ is an elliptic self-adjoint operator, then there is a
 pseudo-inverse $G$ which is self-adjoint (just take the pseudo-inverse
 $\tilde{L}$ provided by  Theorem \ref{thm:ll} and let $G=(\tilde{L} + \tilde{L}^*)/2$).
 Then we have that $L\circ G=G\circ L$, because
  $$
  \la (L\circ G-G\circ L) s,s\ra= \la Gs,Ls\ra - \la Ls, Gs\ra =0\, .
  $$
In particular, $K_1=K_2$ in Theorem \ref{thm:ll}.

For self-adjoint operators, we have the following result

\begin{theorem} \label{thm:G}
 Let $L$ be an elliptic self-adjoint operator of order $l$. Then
 $$
  {W^{s,2}_{\mu}}(S,E)=\ker L\oplus \overline{\im L}\, .
  $$
and an analogous result for $\Gamma_{L^2(\mu)}(S,E)$.
 \end{theorem}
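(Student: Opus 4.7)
The plan is to follow the structure of the classical self-adjoint Hodge decomposition (as in \cite[Ch.\ IV, Thm.\ 5.2]{Wells}), replacing every appeal to Rellich compactness by direct $L^2$-orthogonality combined with a Hahn--Banach argument. The two structural inputs are: self-adjointness, which gives the $L^2(\mu)$-orthogonality $\la Lu,v\ra_{L^2}=\la u,Lv\ra_{L^2}$ and hence $\ker L\cap \overline{\im L}^{W^{s,2}_\mu}=\{0\}$ (if $\sigma=\lim L u_n$ in $W^{s,2}_\mu$ and $L\sigma=0$, then $\|\sigma\|^2_{L^2}=\lim\la\sigma,Lu_n\ra=\lim\la L\sigma,u_n\ra=0$); and ellipticity, which via a self-adjoint pseudo-inverse $G$ (as in the discussion preceding the theorem) yields $GL=LG=I+K$ with $K$ a pseudodifferential operator of order $-1$. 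The corollary preceding the theorem already shows $\ker L\subset\Gamma_{L^2(\mu)}(S,E)$.

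The technical key is to extend this regularity to distributional solutions. If $\phi\in W^{-s,2}_\mu(S,E)$ satisfies $L\phi=0$ weakly, the parametrix identity $\phi=GL\phi-K\phi$ collapses to $\phi=-K\phi$, and iterating $\phi=(-K)^n\phi\in W^{-s+n,2}_\mu$ for every $n$, so $\phi\in\Gamma_{L^2(\mu)}(S,E)\cap\ker L$. With this in hand I produce the decomposition via the $L^2$-orthogonal projection $H\colon L^2_\mu(S,E)\to\ker L$: for $\sigma\in W^{s,2}_\mu$ the image $H\sigma$ lies in $\ker L\subset\Gamma_{L^2(\mu)}\subset W^{s,2}_\mu$, so $\tau:=\sigma-H\sigma$ is automatically in $W^{s,2}_\mu$ and is $L^2$-orthogonal to every element of $\ker L$. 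It remains to show $\tau\in\overline{\im L}^{W^{s,2}_\mu}$. By Hahn--Banach in the Hilbert space $W^{s,2}_\mu$ it suffices to prove that every continuous linear functional $\Phi$ on $W^{s,2}_\mu$ annihilating $\im L$ also annihilates $\tau$. Identifying $(W^{s,2}_\mu)^*$ with $W^{-s,2}_\mu$ via extension of the $L^2$-pairing, such a $\Phi$ is given by some $\phi\in W^{-s,2}_\mu$ with $\Phi(Lu)=0$ for all $u\in W^{s+l,2}_\mu$, i.e.\ $L\phi=0$ weakly; by the regularity just proved, $\phi\in\ker L$, whence $\Phi(\tau)=\la\phi,\tau\ra_{L^2}=0$.

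The parallel decomposition of $\Gamma_{L^2(\mu)}(S,E)$ follows by the same projection $H$: if $\sigma\in\Gamma_{L^2(\mu)}$ then $H\sigma$ is smooth and hence $\tau=\sigma-H\sigma\in\Gamma_{L^2(\mu)}=\bigcap_s W^{s,2}_\mu$; continuous linear functionals on this Fr\'echet space correspond to distributions in $\bigcup_s W^{-s,2}_\mu$, to which the same distributional elliptic regularity applies, so Hahn--Banach again gives $\tau\in\overline{\im L}^{\Gamma_{L^2(\mu)}}$. The main obstacle is precisely this Hahn--Banach/regularity detour: without Rellich compactness $L$ is not Fredholm, so one cannot directly construct a Green's operator with $I=LG+H$ on $W^{s,2}_\mu$, and the abstract $L^2$-projection onto $\ker L$ must be transported to the Sobolev (or smooth) level via the parametrix identity $\phi=-K\phi$.
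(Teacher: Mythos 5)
Your proof is correct, but it is a considerably more elaborate route than the one the paper takes. The paper's own proof is a two-line soft argument: it observes that $\ker L\perp\im L$ by self-adjointness, and then reduces the inclusion $(\ker L)^\perp\subset\overline{\im L}$ to $(\im L)^\perp\subset\ker L$, which again follows from $\la L(s),s_2\ra=\la s,L(s_2)\ra=0$; the parametrix and elliptic regularity are not invoked at all in the proof itself (they appear only in the preceding corollary, which shows $\ker L\subset\Gamma_{L^2(\mu)}(S,E)$). What your version buys is precisely the rigor that the paper's argument glosses over: the paper implicitly treats the $L^2$ pairing as the inner product of ${W^{s,2}_\mu}(S,E)$, so the step ``$(\im L)^\perp\subset\ker L$ implies $(\ker L)^\perp\subset\overline{\im L}$'' (which needs $(V^\perp)^\perp=\overline V$ in a complete inner-product space) is only literally justified for $s=0$, and the computation $\la L(s),s_2\ra=\la s,L(s_2)\ra$ presupposes that an element of $(\im L)^\perp$ is regular enough to apply $L$ to. Your detour through the identification $(W^{s,2}_\mu)^*\cong W^{-s,2}_\mu$, the weak-solution elliptic regularity $\phi=-K\phi\Rightarrow\phi\in\Gamma_{L^2(\mu)}(S,E)$, and Hahn--Banach repairs both points, and it also handles the Fr\'echet space $\Gamma_{L^2(\mu)}(S,E)$ (where there is no inner product at all) in a uniform way; the explicit construction of the decomposition via the $L^2$-projection $H$ onto the (smooth, $s$-independent) kernel is likewise a useful addition. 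The price is that you must extend the Sobolev scale to negative indices and know that pseudodifferential operators and the parametrix identity act continuously there --- facts that are standard on compact manifolds and plausible in the transversally-$L^2$ setting, but which the paper never sets up; if you use this argument you should state and justify that extension.
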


\begin{proof}
 Clearly, $\ker L$ is a closed subspace. Let $s_1\in \ker L$ and $s_2\in \im L$, say
 $s_2=L(t)$. From this we have $\la s_1,s_2\ra=\la s_1, L(t)\ra= \la L(s_1), t\ra =0$. The
 decomposition $\ker L\oplus\overline{\im L}$ is therefore orthogonal.

 It remains to prove that $(\ker L)^\perp \subset \overline{\im L}$.
 Equivalently, $(\im L)^\perp \subset \ker L$. Let $s\in (\im L)^\perp$.
 Then $\la L(s), s_2\ra= \la s, L(s_2)\ra=0$,
 for all $s_2$. Hence $L(s)=0$. %, and so $s\in \ker L \cap (\ker L)^\perp=0$. So $B=0$.
\end{proof}

% there are linear maps
% $H_L: \Gamma(E) \to \Gamma(E)$, $G_L:\Gamma(E)\to \Gamma(E)$ such that:
%  \begin{itemize}
%   \item $H_L$ is the orthogonal projection onto $\cH_L(E)$.
%   \item $L\circ G_L+ H_L=G_L \circ L +H_L=Id$
%   \item $H_L$ and $G_L$ are bounded operators on $L^2(E)$.
%   \item $Im(L\circ G_L)=Im (G_L\circ L)$ is the orthogonal complement to $H_L(E)$.
%  \end{itemize}
%\end{theorem}

A complex of differential operators is a sequence
 $$
 \Gamma(E_0) \stackrel{L_0}{\too} \Gamma(E_1) \stackrel{L_1}{\too}  \ldots \stackrel{L_{m-1}}{\too}\Gamma(E_m) \, ,
 $$
where $E_i$ are vector bundles, and $L_i$ are differential operators such that
$L_i\circ L_{i-1}=0$. The complex is called elliptic if the sequence of symbols
 $$
 \pi^* E_0 \stackrel{\sigma(L_0)}{\too} \pi^* E_1
 \stackrel{\sigma(L_1)}{\too}  \ldots \stackrel{\sigma(L_{m-1})}{\too}\pi^*E_m \, ,
 $$
is exact for each $v\neq 0$. We define the cohomology of the complex as
 $$
 H^q(S,E) = \frac{\ker (L_q:\Gamma(E_q) \to \Gamma(E_{q+1}))}{\im (L_{q-1}:
\Gamma(E_{q-1}) \to \Gamma(E_{q}))} \, ,
 $$
and the $L^2$-cohomology by
 $$
 H^q(S_\mu,E) = \frac{\ker (L_q:\Gamma_{L^2(\mu)}(E_q) \to \Gamma_{L^2(\mu)}(E_{q+1}))}{\im
 (L_{q-1}: \Gamma_{L^2(\mu)}(E_{q-1}) \to \Gamma_{L^2(\mu)}(E_{q}))}\, .
 $$
The \emph{reduced} $L^2$-cohomology is
 $$
 \bar H^q(S_\mu,E) = \frac{\ker L_q}{\, \overline{\im L_{q-1}}\, } \, .
 $$
This is the group $H^q(S_\mu,E)$ quotiented by the closure of $\{0\}$, making it
a Hausdorff space.

We construct the Laplacian operators of the elliptic complex as follows:
 $$
 \Delta_j=L_j^*L_j + L_{j-1}L_{j-1}^* : \Gamma_{L^2(\mu)}(E_j)\to \Gamma_{L^2(\mu)}(E_j)\, .
 $$
These are self-adjoint elliptic operators. There is an associated
operator $G$ given by Theorem \ref{thm:G}. Denote
  $$
  \cH^j(E)=\ker \Delta_j\, .
  $$
And note that $\Delta_j s=0$ if and only if $L_js=0$ and $L_j^*s=0$.
We remove the subindex $j$ from now on.

\begin{theorem} We have the following:
\begin{enumerate}
 \item $\overline{\im \Delta} =\overline{\im L}\oplus \overline{\im L^*}$, and it is
 an orthogonal decomposition.
 \item $\Gamma_{L^2(\mu)}(S,E_j)=\cH^j(E)\oplus \overline{\im L}\oplus \overline{\im L^*}$.
 \item There is a canonical isomorphism $\cH^j(E)\cong \bar H^j (S_\mu, E)$.
 \end{enumerate}
\end{theorem}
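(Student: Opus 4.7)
The overall plan is to run the standard Hodge-theoretic argument. The first move is to apply Theorem~\ref{thm:G} to the self-adjoint elliptic operator $\Delta=\Delta_j$ on $\Gamma_{L^2(\mu)}(S,E_j)$, obtaining the orthogonal decomposition
\[
\Gamma_{L^2(\mu)}(S,E_j) \;=\; \cH^j(E) \oplus \overline{\im \Delta}.
\]
Everything else comes from refining $\overline{\im \Delta}$ into $\overline{\im L_{j-1}}\oplus\overline{\im L_j^*}$ (what the statement abbreviates as $\overline{\im L}\oplus\overline{\im L^*}$). The essential algebraic input is that a harmonic section $h\in\cH^j(E)$ is simultaneously killed by $L_j$ and by $L_{j-1}^*$, because pairing $\Delta h=0$ with $h$ gives $0=\la \Delta h,h\ra=\|L_j h\|^2+\|L_{j-1}^* h\|^2$.

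For (1), the orthogonality $\la L_{j-1}u,L_j^*v\ra=\la L_jL_{j-1}u,v\ra=0$ is forced by the complex condition $L_jL_{j-1}=0$, so $\overline{\im L_{j-1}}\oplus\overline{\im L_j^*}$ is a genuine orthogonal (hence closed) direct sum. The inclusion $\overline{\im\Delta}\subseteq\overline{\im L_{j-1}}\oplus\overline{\im L_j^*}$ is built into the formula $\Delta s=L_{j-1}L_{j-1}^*s+L_j^*L_js$. For the reverse inclusion I would invoke Theorem~\ref{thm:G} to identify $\overline{\im\Delta}=(\ker\Delta)^\perp=\cH^j(E)^\perp$; the observation above gives $\la L_{j-1}u,h\ra=\la u,L_{j-1}^*h\ra=0$ and $\la L_j^*v,h\ra=\la v,L_jh\ra=0$ for every $h\in\cH^j(E)$, placing both $\overline{\im L_{j-1}}$ and $\overline{\im L_j^*}$ inside $\cH^j(E)^\perp$.

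Part (2) follows at once by combining (1) with Theorem~\ref{thm:G}. For (3), I would define $\Phi:\cH^j(E)\to\bar H^j(S_\mu,E)$ by $h\mapsto[h]$, which is legitimate because $L_jh=0$. Injectivity is immediate: if $[h]=0$ then $h\in\overline{\im L_{j-1}}$, but (2) gives $\cH^j(E)\perp\overline{\im L_{j-1}}$, forcing $h=0$. For surjectivity, take $s\in\ker L_j$ and use (2) to write $s=h+a+b$ with $h\in\cH^j(E)$, $a\in\overline{\im L_{j-1}}$ and $b\in\overline{\im L_j^*}$; pick $v_n$ with $L_j^*v_n\to b$ in $L^2_\mu$ and compute
\[
\|b\|^2 \;=\; \la s,b\ra \;=\; \lim_n \la s,L_j^*v_n\ra \;=\; \lim_n \la L_js,v_n\ra \;=\; 0,
\]
so $b=0$, $s-h=a\in\overline{\im L_{j-1}}$, and hence $[s]=[h]$.

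The main obstacle is really just bookkeeping: keeping the four operators $L_{j-1},L_{j-1}^*,L_j,L_j^*$ distinct and ensuring every closure is taken in the same ambient Hilbert space $L^2_\mu(S,E_j)$. Elliptic regularity (the corollary after Theorem~\ref{thm:ll}) already places $\cH^j(E)$ inside $\Gamma_{L^2(\mu)}(S,E_j)$, so no analytic work beyond Theorem~\ref{thm:G} is required; the entire content is the way harmonicity simultaneously forces orthogonality to both image subspaces.
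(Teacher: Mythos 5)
Your proposal is correct and follows essentially the same route as the paper: apply Theorem~\ref{thm:G} to the self-adjoint elliptic operator $\Delta$, use $0=\la\Delta h,h\ra=\|Lh\|^2+\|L^*h\|^2$ to split $\overline{\im\Delta}$ into $\overline{\im L}\oplus\overline{\im L^*}$, and read off the isomorphism with $\bar H^j(S_\mu,E)$ from the three-term decomposition. Your injectivity argument (orthogonality of $\cH^j(E)$ to $\overline{\im L_{j-1}}$, rather than only to $\im L_{j-1}$) and your limiting computation showing $b=0$ are in fact slightly more careful than the paper's, but the underlying argument is the same.
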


\begin{proof}
(1) The inclusion $\subset$ is clear. Note that the decomposition is orthogonal:
 $\la L(s_1), L^*(s_2)\ra =\la LL (s_1),s_2\ra =0$, for all $s_1,s_2$.

For the reverse inclusion, let us check that $\im L\subset \overline{\im \Delta}$
(the other inclusion is similar). This is equivalent to proving that $\ker \Delta\subset
(\im L)^\perp$. But $\ker \Delta = \ker L\cap \ker L^* \subset \ker L^* \subset (\im L)^\perp$,
so the result follows.

%Now let
% $$
% s\in B=\overline{\im \Delta} \cap
% \left(\overline{\im L}\oplus \overline{\im L^*}\right)^\perp
% $$
%Then there are $s_n\to s$, $s_n\in \im \Delta$. We decompose $s_n=b_n+a_n+c_n$, with $b_n\in B$,
%$\a_n\in \overline{\im L}$, $c_n\in \overline{\im L^*}$.
%Therefore $a_n,c_n\to 0$, $s_n-b_n\to 0$, and $s_n=
%\Delta \xi_n$. Then
% $$
% \la s_n,s_n\ra=\la s_n, b_n\ra + \la s_n, s_n-b_n\ra \to 0\, ,
% $$
%since $\la s_n,b_n\ra= (LL^*+L^*L)\xi_n,b_n\ra=0$. So $s=0$.

(2) follows from (1) and Theorem \ref{thm:G}.

(2) Consider the map $\cH^j(E)\to \bar H^j (S_\mu, E)$. This is well defined because
if $\Delta s=0$, $s\in \Gamma_{L^2(\mu)}(S,E_j)$ then
$0=\la s,\Delta s\ra= \la s, (LL^*+L^*L)s\ra = \la L^*s, L^*s\ra +
\la Ls, Ls\ra \implies Ls=L^*s=0$.

It is injective. If $s= L t$ and $\Delta s=0$ then $0=L^*s=L^*Lt$, so $||Lt ||^2=\la L^*Lt,t\ra=0$,
so $s=Lt=0$.

It is surjective. Take a class $[s]\in \bar H^j (S_\mu, E)$. Decompose $s=s_1+s_2+s_3$, where $s_1\in
\cH^j(E)$, $s_2 \in \overline{\im L}$ and $s_3\in \overline{\im L^*}$. Now $0=Ls=Ls_3$, so
$s_3 \perp \im L^*$. Therefore $s_3=0$. So $s=s_1+s_2$ and $[s]=[s_1]$
in $\bar H^j(S_\mu,E)$, where $s_1 \in \cH^j(E)$.
\end{proof}

\subsection{Harmonic theory}
The Riemannian metric and the orientation %and the existence of transverse measurs give rise to a natural daval
give rise to a natural volume form along leaves $\vol
\in \Omega^k(S)$. The usual Hodge-$*$ operator (see \cite{Wells}) can be defined for
forms on $S$, actually, it is the $*$ operator on leaves. This operator
$*:\Omega^p(S) \to \Omega^{k-p}(S)$ is defined by
  $$
   \alpha \wedge * \beta =   ( \alpha, \beta)  \, \vol\, ,
  $$
for $\alpha,\beta \in \Omega^p(S)$, where $(\cdot, \cdot)$ is the point-wise metric induced on
forms. Note that $*$ extends to $*:\Omega^p_{L_\mu^2}(S) \to \Omega^{k-p}_{L_\mu^2}
(S)$, since it is leaf-wise isometric. Note that $\vol= * 1$.

It is easy to check that  $d^*=\pm * d *$. 
The Laplacian is defined as $\Delta=dd^* +d^* d$. Note that if $\Delta s=0$ then
$(s, \Delta s)=(s, dd^*s)+ (s, d^*ds)= (d^*s,d^*s)+(ds,ds)=||d^*s||^2+ ||ds||^2$. So
$d^*s=0$ and $ds=0$. We define the space of harmonic forms:
  $$
   \cK^j(S_\mu)=\cH_\Delta(\wedge^j T^*S)\, .
   $$
Then the theory of elliptic operators says the following

\begin{theorem} We have:
  \begin{itemize}
   \item The space of harmonic sections $\cK^j(S_\mu)\subset \Omega^j_{L^2(\mu)}(S)$.
   \item There is a natural isomorphism $\bar H^j_{DR}(S_\mu) \cong \cK^j(S_\mu)$.
     \end{itemize}
\end{theorem}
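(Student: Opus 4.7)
The plan is to apply the general machinery already established for elliptic complexes to the particular case of the De Rham complex
$$
\Omega^0_{L^2(\mu)}(S) \stackrel{d}{\too} \Omega^1_{L^2(\mu)}(S) \stackrel{d}{\too} \ldots \stackrel{d}{\too} \Omega^k_{L^2(\mu)}(S).
$$
The identifications $\Omega^p = \Gamma(\bigwedge^p T^*S)$ make this a sequence of differential operators between sections of vector bundles $E_p = \bigwedge^p T^*S$ in the sense introduced earlier, with $d\circ d = 0$ verified leaf-wise.

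First I would check that this complex is elliptic. The symbol $\sigma_1(d)(x,v) : \bigwedge^p T_x^*S \to \bigwedge^{p+1} T_x^*S$ is exterior multiplication by (a multiple of) $v$, which is exactly the symbol computation on each leaf. The associated Koszul sequence is exact for $v \neq 0$, so the complex is elliptic in the sense of the previous subsection. Consequently the Laplacian $\Delta = dd^* + d^*d$ coincides with the operator $\Delta_j = L_j^*L_j + L_{j-1}L_{j-1}^*$, which is an elliptic self-adjoint differential operator of order $2$; here $d^* = \pm * d *$ is the formal adjoint computed leaf-wise with respect to the $L^2(\mu)$ inner product $(\alpha,\beta) = \int_S (\alpha,\beta)\,\vol$ using the daval measure.

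Having recognized the De Rham $L^2$-complex as an elliptic complex of differential operators, the two assertions now follow directly from the general theorem of the previous subsection applied to $E_j = \bigwedge^j T^*S$. The first bullet is just the corollary that elements of $\ker \Delta \subset W^{s,2}_\mu(S,E_j)$ actually lie in $\Gamma_{L^2(\mu)}(S,E_j) = \Omega^j_{L^2(\mu)}(S)$, by the bootstrap argument via the pseudo-inverse $G$ of Theorem \ref{thm:ll}. The second bullet is the canonical isomorphism $\cH^j(E) \cong \bar H^j(S_\mu, E)$ from the Hodge-type orthogonal decomposition $\Gamma_{L^2(\mu)}(S,E_j) = \cK^j(S_\mu) \oplus \overline{\im d}\oplus \overline{\im d^*}$.

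The only real content that is specific to the solenoid situation, as opposed to a direct quotation of the classical case, is that ellipticity and the construction of the pseudo-inverse $\tilde L$ (and hence $G$) go through for leaf-wise operators with merely transversally continuous coefficients; this is precisely the content of Proposition \ref{prop:symbol} and Theorem \ref{thm:ll}, whose Sobolev and regularity estimates were explicitly verified for $W^{l,2}_\mu$. The main point to keep in mind — and the only place where the argument differs in spirit from the compact manifold case — is that Rellich's compactness theorem is not available, so $\cK^j(S_\mu)$ need not be finite-dimensional, but this is not needed for the Hodge isomorphism itself since we work with the \emph{reduced} cohomology.
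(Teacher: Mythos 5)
Your proposal is correct and follows exactly the route the paper takes: the paper states this theorem as an immediate consequence of the general elliptic-complex theorem (applied to $E_j=\bigwedge^j T^*S$ with the leaf-wise exterior derivative, whose symbol gives the exact Koszul sequence), together with the regularity corollary of Theorem \ref{thm:ll}. Your added remark that only reduced cohomology is needed because Rellich fails matches the paper's own discussion.
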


\begin{corollary} \label{cor:PD}
Poincar\'e duality:
 $$
 * :\cK^p(S_\mu)\to \cK^{k-p}(S_\mu)
 $$
is an isomorphism. %In particular $b_p(S_\mu)=b_{k-p}(S_\mu)$.

If $S$ is ergodic, then $H^0_{DR}(S_\mu)\cong H^k_{DR}(S_\mu)\cong \RR$ (with the isomorphism given by
integration $\int_{S_\mu}$. Therefore
 $$
 \int_{S_\mu} : \bar H^p_{DR}(S_\mu) \ox \bar H^{k-p}_{DR}(S_\mu) \to \RR
 $$
is a perfect pairing.
\end{corollary}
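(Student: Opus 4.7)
The plan is as follows. For the first assertion I would check that the Hodge star operator commutes with the Laplacian $\Delta = dd^* + d^*d$ on $\Omega^p_{L^2(\mu)}(S)$. The classical identities $d^* = \pm\, {*}\,d\,{*}$ and $*^2 = (-1)^{p(k-p)}\mathrm{Id}$ on $p$-forms hold pointwise along every leaf, hence extend verbatim to the solenoid, and a direct computation yields $*\Delta = \Delta *$. Since $*$ is a leafwise isometry depending transversally continuously on the leaf, it preserves $L^2(\mu)$-transversally smooth forms, and therefore sends $\cK^p(S_\mu) = \ker\bigl(\Delta|_{\Omega^p_{L^2(\mu)}(S)}\bigr)$ isomorphically onto $\cK^{k-p}(S_\mu)$, with inverse $(-1)^{p(k-p)}\,{*}$.

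For the ergodic identifications I would start with $p = 0$. Any $f \in \cK^0(S_\mu)$ satisfies $df = 0$, hence is leafwise locally constant and $L^2(\mu)$-transversal, so $\cK^0(S_\mu) = H^0_{DR}(S_\mu) = H^0(S,\underline\RR_\mu)$. Under the ergodicity hypothesis the preceding lemma identifies this space with $\RR$. Applying the $*$-isomorphism yields $\cK^k(S_\mu) \cong \RR$, spanned by $\vol = {*}1$; since $\int_{S_\mu}\vol = \mu(S) > 0$, integration realizes this as an isomorphism onto $\RR$. To upgrade to the unreduced $H^k_{DR}(S_\mu) \cong \RR$ I would observe that $\Omega^{k+1}_{L^2(\mu)}(S) = 0$ (there are no $(k{+}1)$-forms along $k$-dimensional leaves), so $\ker d$ in top degree equals $\Omega^k_{L^2(\mu)}(S)$; combined with the top-degree orthogonal Hodge decomposition $\Omega^k_{L^2(\mu)}(S) = \cK^k(S_\mu) \oplus \overline{\im d}$ furnished by Theorem~\ref{thm:G} and the continuity of $\int_{S_\mu}$ on $L^2_\mu$, one concludes $\ker \int_{S_\mu} = \overline{\im d}$, so $\int_{S_\mu}$ descends to an isomorphism $H^k_{DR}(S_\mu) \cong \RR$.

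For the perfect pairing I would define
$$
 \bar H^p_{DR}(S_\mu) \otimes \bar H^{k-p}_{DR}(S_\mu) \to \RR, \qquad ([\alpha],[\beta]) \mapsto \int_{S_\mu} \alpha \wedge \beta.
$$
The wedge of two $L^2(\mu)$-transversally smooth forms has $L^1(\mu)$-transversal coefficients by Cauchy--Schwarz, so integration is well defined on representatives. To pass to reduced cohomology: if $\alpha = \lim d\eta_n$ in $\Omega^p_{L^2(\mu)}(S)$ and $d\beta = 0$, Stokes (already implicit in the descent of $\int_{S_\mu}$ to cohomology) gives $\int_{S_\mu} d\eta_n \wedge \beta = 0$, and the estimate $|\int_{S_\mu}(\alpha - d\eta_n)\wedge \beta| \le \|\alpha - d\eta_n\|_{L^2_\mu}\,\|\beta\|_{L^2_\mu}$ passes this to the limit; the other slot is symmetric. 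Non-degeneracy then follows from the first assertion: given $0 \neq [\alpha] \in \bar H^p_{DR}(S_\mu)$, represent it by the harmonic form $\alpha \in \cK^p(S_\mu)$ supplied by the Hodge theorem and pair with ${*}\alpha \in \cK^{k-p}(S_\mu)$ to obtain $\int_{S_\mu} \alpha \wedge {*}\alpha = \|\alpha\|_{L^2_\mu}^2 > 0$.

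The main obstacle is the absence of Rellich compactness: we cannot argue as in the classical compact case that $\im d$ is automatically closed, so the unreduced claim $H^k_{DR}(S_\mu) \cong \RR$ has to be extracted from the top-degree Hodge decomposition combined with continuity of $\int_{S_\mu}$ rather than from elliptic regularity directly. Every other ingredient reduces cleanly to a pointwise leafwise identity together with the already-established Hodge isomorphism $\bar H^p_{DR}(S_\mu) \cong \cK^p(S_\mu)$.
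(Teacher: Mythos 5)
The paper states this corollary without proof, so the relevant comparison is to the natural deduction from the Hodge theorem and the ergodicity lemma, and most of your argument is exactly that: $*$ commutes with $\Delta$ leafwise and preserves $L^2(\mu)$-transversal smoothness, so it maps $\cK^p(S_\mu)$ isomorphically to $\cK^{k-p}(S_\mu)$; $\cK^0(S_\mu)=H^0(S,\underline\RR_\mu)\cong\RR$ by the lemma on ergodic dimension; and non-degeneracy of the pairing follows from $\int_{S_\mu}\alpha\wedge *\alpha=\|\alpha\|^2_{L^2_\mu}$ applied to the harmonic representative. The well-definedness of the pairing on reduced classes via the Cauchy--Schwarz estimate is also correct.

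There is, however, a genuine gap in the top-degree step. From $\Omega^k_{L^2(\mu)}(S)=\cK^k(S_\mu)\oplus\overline{\im\, d}$ and the continuity of $\int_{S_\mu}$ you correctly get $\ker\int_{S_\mu}=\overline{\im\, d}$, but this identifies $\RR$ with $\Omega^k_{L^2(\mu)}(S)/\overline{\im\, d}=\bar H^k_{DR}(S_\mu)$, not with the unreduced quotient $H^k_{DR}(S_\mu)=\Omega^k_{L^2(\mu)}(S)/\im\, d$, whose extra piece is $\overline{\im\, d}/\im\, d$. That piece does not vanish in general: for the Kronecker $1$-solenoid (which is uniquely ergodic and has $k=1$) the paper computes in Section \ref{sec:example} that $H^1_{DR}(S_\mu)$ is infinite dimensional, so the unreduced statement $H^k_{DR}(S_\mu)\cong\RR$ is actually false as written and cannot be ``extracted'' by any refinement of your argument. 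The corollary should be read (and your proof restated) with $\bar H^k_{DR}(S_\mu)$ in place of $H^k_{DR}(S_\mu)$; with that correction your argument goes through, and the final perfect pairing, which is stated for the reduced groups, is unaffected.
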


%The dimension $b_j(S_\mu)=\dim \cK^j$ is finite (called Betti number).

In general, the spaces $\cK^p(S_\mu)$ are not in general finite dimensional. For instance, take
a solenoid which is a fibration, i.e., $S$ is a compact $(n+k)$-manifold
such that there is a submersion $\pi:S\to B$ onto an $n$-dimensional manifold,
and the transversal measure is induced by a measure $\mu$ on $B$.
Then we have a fiber bundle $\HH^p\to B$ such that $\HH^p_y=H^p(F_y)$, $F_y=\pi^{-1}(y)$, $y\in B$.
Then $\cK^p(S_\mu)\cong L^2_\mu(\HH^p)$.

Nonetheless, we propose the following:
%\bigskip

\begin{question} \label{conj}
If $S_\mu$ is an ergodic solenoid with controlled
growth, are the spaces $\cK^p(S_\mu)$ of finite dimension?
\end{question}

A controlled solenoid has transversal measures and is defined in \cite{MPM2}.
For such a solenoid, any leaf has an
exhaustion $K_n$ by compact sets such that for any flow-box in a finite covering, the
number of local leaves which intersect partially $K_n$ is negligible.
In this case, the normalized measure supported on $K_n$ converges to a daval measure
(giving rise to a transversal measure, as usual).
If the solenoid $S_\mu$ is ergodic, then for almost all $\mu$-leaves, this limit coincides with
$\mu$. So in this situation, we can understand the behaviour of harmonic forms by restricting
to a leaf, and then study the spaces of harmonic forms on this leaf (which is a complete
manifold with a quasi-periodic behaviour).

The example in section \ref{sec:example} shows that the question has a negative
answer if we ask instead for $H_{DR}^p(S_\mu)$.

\section{Example: Kronecker solenoids} \label{sec:example}

Let us develop the example of Kronecker solenoids, i.e. the flat torus with a linear foliation. Let $\TT^n=\RR^n/\ZZ^n$ with
a flat euclidean metric on $\TT^n$. Consider a foliation given by a $k$-dimensional
linear subspace $W\subset \RR^n$. That is, fix an orthonormal basis $w_1,\ldots, w_n$,
so that $W=\langle w_1,\ldots, w_k\rangle$.
Consider the solenoid $S$ whose leaves
are the images of $k$-planes of $\RR^n$ parallel to $W$ under $\pi:\RR^n\to \TT^n$.

The solenoid is minimal with all leaves dense in $\TT^n$ if there is no
hyperplane $H=\sum m_ix_i=0$ with $m_i\in \ZZ$ so that $W\subset H$. Equivalently,
for any integer vector $m \in \ZZ^n$, we have that $\langle m, w_j \rangle \not=0$
for some $j=1,\ldots, k$. Equivalently,
 \begin{equation}\label{eqn:last-condition}
 \sum \langle m, w_j \rangle^2 \not=0 \ .
 \end{equation}
Notice that all leaves have the same topological type and are simply connected and 
diffeomorphic to $\RR^k$ if and only if $W\cap \ZZ^n =\{ 0\}$. We shall suppose that
the solenoid is minimal, but we allow $W\cap \ZZ^n \not=\{ 0\}$.

In this situation, we have affine transversals $T\subset W^\perp=\langle w_{k+1},\ldots, w_n \rangle $ and the holonomy are irrational
translations, therefore by Haar theorem the unique transversal measure is the Lebesgue measure
on the transversal. Thus $S$ is uniquely ergodic, and the unique daval measure
is the Haar measure on $\TT^n$, image of the Lebesgue measure on $\RR^n$. .

We can characterize the space of functions $C^\infty$
on leaves and $L^2$-transversally by their Fourier expansion. All
such functions are in particular $L^2$ on the torus,
so they have a Fourier expansion ( $x=(x_1,\ldots, x_n)\in \TT^n$, $m=(m_1,\ldots , m_n)\in \ZZ^n$)
 $$
 f(x)= \sum_{m\in \ZZ^n} a_{m} \ e^{2\pi i \langle m , x\rangle } \, ,
 $$
where $\sum |a_{m}|^2< \infty$.

To be smooth on leaves is equivalent to be $W^{l,2}$ on leaves for all $l$. Let us take a
derivative along some $w_j$, $j=1,\ldots, k$.
$$
 D_{w_j} f= \sum_{m\in \ZZ^n} \langle m , w_j \rangle \, a_m \  e^{2\pi i \langle m , x\rangle}
 $$
So the condition  $C^\infty$ on leaves and $L^2$-transversally is
 $$
 \sum_{m\in \ZZ^n} | \prod_{j=1}^k \langle m , w_j \rangle ^{r_j}\,  a_{m}|^2< \infty
 $$
for all $r_1,\ldots, r_k \geq 0$.

Note that the Laplacian on functions is the usual Laplacian on the leaves direction, so that
 $$
 \Delta f= -\sum_{j=1}^k D_{w_j}^2 f = -\sum_{m\in \ZZ^n}  \left (\sum_{j=1}^k  \langle m ,
  w_j \rangle^2 \right )
 a_{m} \ e^{2\pi i \langle m , x\rangle} \, .
 $$
So $\Delta f=0 \iff f=$ constant, using (\ref{eqn:last-condition}).

Now consider $p$-forms, $0\leq p \leq k$. A $p$-form $\omega=\sum_{|J|=p} f_J \, d\varpi_J$, where
$\varpi_1,\ldots, \varpi_k$ are coordinates on the leaves, dual to the basis $w_1,\ldots, w_k$,
and $J=(j_1,\ldots, j_p)$, $1\leq j_1<\ldots <j_p\leq k$, $d\varpi_J=d\varpi_{j_1}\wedge \ldots
\wedge d\varpi_{j_p}$. For the flat metric, $\Delta  \omega=\sum_{|J|=p} \Delta f_J \, d\varpi_J$.
So $\Delta \omega=0 \iff f_J=$ constant.

In conclusion,
 $$
 \cK^p(S_\mu)=\bigwedge\nolimits^p \,  W^* \, ,
 $$
which is of dimension $\binom{k}{p}$.

Note that the $*$-Hodge operator satisfies $*\varpi_J=\pm\varpi_{J^c}$, where $J^c$ is the complement
$\{1,\ldots, k\}-J$. So $*:\cK^p(S_\mu)\to \cK^{k-p}(S_\mu)$ is the natural isomorphism
$\bigwedge^p W^*\cong \bigwedge^{k-p} W^*$.

\noindent \textbf{The case of the Kronecker $1$-solenoid.}
Let us see the particular case of the Kronecker
$1$-solenoid with $w_1=\alpha=(\alpha_1 ,\ldots , \alpha_n)$ and
$$
\dim_\QQ \la \alpha_1 , \ldots ,\alpha_n \ra=n \ .
$$
As above, $\cK^0(S_\mu)=\RR$ and $\cK^1(S_\mu)=\RR$.

Let us compute $H^0_{DR}(S_\mu)$ and $H^1_{DR}(S_\mu)$.
For a function $f=\sum a_{m} e^{2\pi i \langle m, x\rangle}$,
 $$
 df=\sum \langle m, \alpha \rangle \, a_{m} \ e^{2\pi i \langle m, x\rangle}\, .
 $$
So $df=0 \iff f=$ constant. Then $H^0_{DR}(S_\mu)=\RR$.

Now a $1$-form $\omega= g\, d\varpi_1$, $g=\sum b_{m} \ e^{2\pi i \langle m, x\rangle}$, is exact if and only if
 $$
 g=df=\sum \langle m, \alpha \rangle \, a_{m} \  e^{2\pi i \langle m, x\rangle},
 $$
i.e.
 $$
 a_{m}= \frac{b_{m}}{\langle m, \alpha \rangle }\, , \qquad m\in \ZZ^n\, ,
 $$
should be in $L^2$. So
 \begin{equation}\label{eqn:am}
 H^1_{DR}(S_\mu)=\frac{\{ (b_{m}) | \sum |\langle m, \alpha \rangle^r b_{m}|^2 <\infty, \forall r\geq 0\} }{
\{ (b_{m}) | \sum |\langle m, \alpha \rangle^r b_{m}|^2 <\infty, \forall r\geq -1\}} \, .
 \end{equation}

This space is always infinite dimensional. We can construct an uncountable basis by using
Minkowski's diophantine approximation theorem: there are infinitely many integer vectors $m\in \ZZ^n$
such that
$$
|\langle m, \alpha \rangle | < \frac{1}{||m||} \ .
$$
Then we can choose an uncountable number of {\it disjoint} sequences $(m_k)\subset \ZZ^n$ such that $\langle m_k, \alpha \rangle \to 0$
when $k\to +\infty$.
For each such sequence take a Fourier series with support on it (that is $b_m=0$ if $m\not=m_k$) such that
$\sum_k b_{m_k}^2 < +\infty $ but
$$
\sum_k \left (\frac{b_{m_k}}{\langle m_k, \alpha \rangle }\right )^2=+\infty \ .
$$
We can take for example $b_{m_k}=\langle m_k, \alpha \rangle$. Then clearly $(b_{m_k})$ lies in (\ref{eqn:am}).
In this way we get uncountably many independent elements in (\ref{eqn:am}).

\medskip

As an aside, note that the De Rham cohomology $H^1_{DR}(S)$ is also of infinite dimension
(as it is proved in \cite{Macias} by sheaf theoretic methods). However, if we use forms with regularity
$C^{\infty,\infty}$, using the criterion characterising $C^\infty$ as those with polynomially decaying Fourier coefficients,
we can compute that
 $$
  H^1(S, \RR_\infty) \cong \frac{\{ (b_{m}) \, | \,  \,  ||b_m|| =O ((1+||m||)^{-r}), \forall r>0\}}{
 \{ (b_{m}) \, | \,  \, ||\langle m, \alpha \rangle^{-1} b_m|| =O ((1+||m||)^{-r}), \forall r>0 \}} \,
 $$
(here $\RR_\infty$ is the sheaf of functions locally constant on leaves and $C^\infty$ transversally). For
a vector $\alpha\in DC(\gamma, \tau)$ satisfying a diophantine condition, we have that there exist $\tau >0$ and $\gamma >0$ such that
for all $m\in \ZZ^n$ we have
$$
|\langle m, \alpha \rangle | \geq \frac{\gamma}{||m||^{n+\tau}} \ .
$$
(The subset of diophantine vectors $DC(\tau)=\bigcup_{\gamma>0}
DC(\gamma, \tau )\subset \RR^n$ is of full Lebesgue measure.)
Then it follows that when $\alpha$ is diophantine, 
$H^1(S, \RR_\infty)$ is one dimensional, and otherwise, when
$\alpha$ is Liouville, it is infinite dimensional (\cite{Reinhart},\cite{Heitsch}).
To impose such type of transversal regularity in this problem does not seem to be natural.

\end{document}